\newcommand{\de}{\Delta}
\newcommand{\Frobdim}{\mbox{{FrobDim}}}
\newcommand{\rt}{\rightarrow}
\newcommand{\al}{\alpha}
\newcommand{\be}{\beta}
\renewcommand {\k} {\Bbbk}
\newcommand{\ot}{\otimes}
\newcommand{\ti}{\times}
\newcommand{\id}{\ensuremath{\mathrm{id}}}
\newcommand{\h}{\mathrm{H}}
\theoremstyle{plain}
\newtheorem{theorem}{Theorem}[section]
\newtheorem{lemma}[theorem]{Lemma}
\newtheorem{prop}[theorem]{Proposition}
\theoremstyle{definition}
\newtheorem{defn}{Definition}[section]
\newtheorem{exmp}[theorem]{Example}
\newtheorem{coro}[theorem]{Corollary}
\theoremstyle{remark}
\newtheorem*{rem}{Remark}
\theoremstyle{conjecture}
\title{Nearly Frobenius Algebras}
\author{Ana Gonz\'alez\thanks{Instituto de Matem\'atica y Estad\'{\i}stica  ``Prof. Ing. Rafael Laguardia'',  Facultad de Ingenier\'{\i}a  Julio Herrera y Reissig, 565 CP11300. Montevideo, Uruguay.}, Ernesto Lupercio\thanks{Centro de Investigaciones y Estudios Avanzados,  Av. Instituto Polit\'ecnico Nacional 2508, Col. San Pedro Zacatenco, M\'exico D.F., M\'exico.}, Carlos Segovia\thanks{Instituto de Matem\'aticas de la UNAM, Campus Oaxaca. Oaxaca, M\'exico.} and Bernardo Uribe\thanks{Universidad del Norte, Barranquilla, Colombia.}}
\begin{document}
\maketitle
\begin{abstract}
\noindent In this introductory paper we  study nearly Frobenius algebras which are generalizations of the concept of a Frobenius algebra which appear naturally in topology: nearly Frobenius algebras have no traces (co-units). We survey the most basic foundational results and some of the applications they encounter in geometry, topology and representation theory. \end{abstract}
\section{Frobenius Algebras}

A Frobenius algebra is a pair $(A, \theta)$ where $A$ is a (graded, unital) $k$-algebra, and $\theta$ is a trace $\theta \colon A \to k$ (meaning that $\langle a | b \rangle := \theta(a\cdot b)$ is a non-degenerate bilinear form defined on $A$) \cite{B-N37,N39,N41}. Frobenius algebras play an important role in the theory of representations of groups and also in topology. To illustrate this, let us point out the two most basic examples of Frobenius algebras:

\begin{itemize}

	\item[a)] For a given finite group $G$, the group algebra $A := k[G]$ of a finite group together with the trace $\theta(\sum a_g g) := a_1$, is a Frobenius algebra.
	
	\item[b)] For a given $d$-dimensional compact, closed manifold $M$, the cohomology algebra $A=H^*(M,\R{})$ together with the trace $\theta(\omega) := \int_M \omega$, is also a Frobenius algebra.
	
\end{itemize}

Verifying the first assertion is immediate, while the second is equivalent to Poincaré duality.

To motivate the notion of a nearly Frobenius algebra (NFA form now on), let us consider two families of algebras that, while very similar to the two previous instances of Frobenius algebras, cannot be made into Frobenius algebras themselves:

\begin{itemize}

	\item[a')] The group algebra of the infinite group $\Integer$ of integers, $A:= k[\Integer]\cong k[t,t^{-1}]$, which coincides with the algebra of Laurent polynomials in one variable $t$. 
	
	\item[b')] The cohomology algebra $A:=H^*(M,k)$ for a non-compact manifold $M$.
	
\end{itemize}

In the first case, if we define $\theta(\sum a_i t^i)=a_0$, then 
$$\langle \sum a_i t^i | \sum b_i t^i \rangle = \sum a_i b_{-i},$$
whose value can easily take infinite values, for example, by taking $a_i=b_i=1$ for all $i$.

In the second case the situation is similar, for one can find a differential form defined on $M$ so that $\theta(\omega)=\int_M \omega=+\infty.$

As we will show below, both examples are NFAs, a concept that generalizes that of a Frobenius algebra in many natural examples. 

\section{Topological Quantum Field Theories}

To motivate the definition of an NFA let us revisit a famous reformulation of the definition of a Frobenius algebra, that is, the definition of a 2-dimensional topological quantum field theory (TQFT2 form now on) \cite{AtiyahQFT, MooreSegal}. First we need to define the category $\Cob$ of 2-dimensional cobordisms. The objects of $\Cob$ are non-negative integer numbers $n \geq 0$ thought of as disjoint unions of $n$ copies of the circle. The arrows are triples $(n,g,m)$ of non-negative integers thought of as topological 2-dimensional surfaces with $n$ incoming circles, $m$ outgoing circles and genus $g$. The case $n=0$ (respectively $m=0$) should be interpreted as a surface without incoming boundary components (resp. outgoing boundary components). 

 \begin{figure}[h!]
 \centering
  \includegraphics[width=0.5\textwidth]{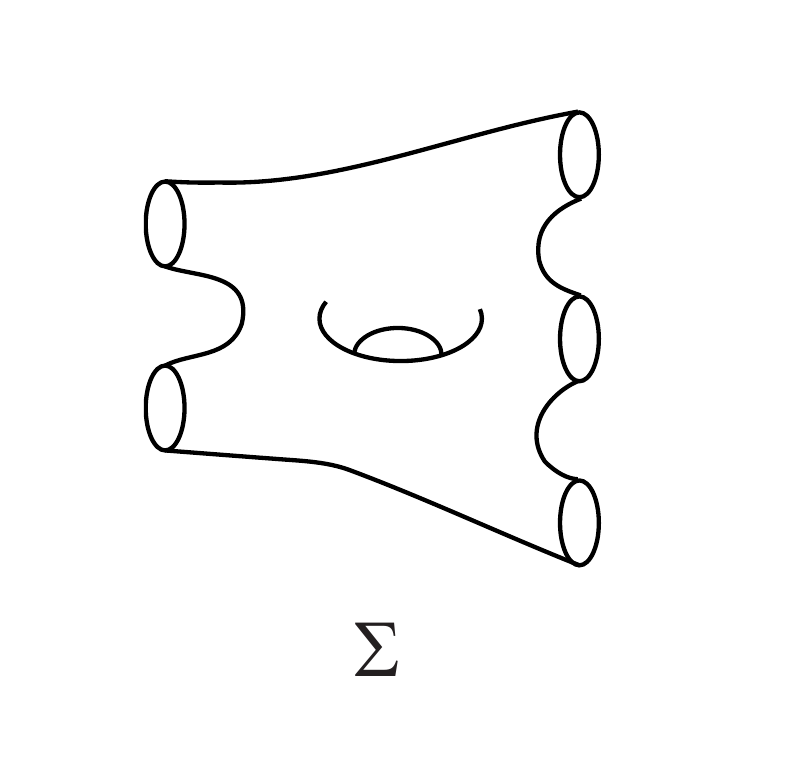}
 \caption{The morphism $(2,1,3)\colon 2 \to 3$, here $\Sigma=(2,1,3)$ in $\Cob$}\label{CobSigma}
 \end{figure}
 
 The composition law for morphisms in $\Cob$ is given by glueing surfaces: 
 $$(n',g',m'=n) \circ (n,g,m) := (n',g+g'+n-1,m).$$
 
  \begin{figure}[h!]
 \centering
  \includegraphics[width=0.4\textwidth]{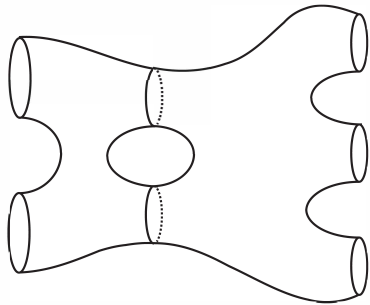}
 \caption{The composition $(2,0,2) \circ (2,0,3) = (2,1,3)$  in $\Cob$}\label{Glueing}
 \end{figure}

\begin{defn} 
A 2-dimensional TQFT is a functor $$Z\colon \Cob \to \Vect$$ from the category of 2-dimensional cobordisms to the category $\Vect$ of $k$-vector spaces so that:
$$Z(n+m) \cong Z(n) \otimes Z(m).$$
\end{defn}

Functoriality can be expressed by the matrix factorization:
$$ Z(n',g',m'=n) \circ Z(n,g,m) = Z(n',g+g'+n-1,n).$$
This is referred to as the glueing axiom.

We will also require $Z(1,0,1):A\to A$ to be the identity map. This requirement immediately implies the adjoint relation:
$$Z(n,g,m)^* = Z(m,g,n),$$ and the existence of a canonical isomorphism $A^*\cong A$. It also implies the finite dimensionality of $A$ as a $k$-vector space.

Clearly if we set once and for all $A:=Z(1)$, then $Z(n)=A^{\otimes n}$ is completely determined. It is also immediate to check that $\mu:=Z(1,0,2):A\otimes A \to A$ defines an associative product on $A$.The map $\theta:= Z(1,0,0) \colon A \to k$ is a trace on $A$ making it into a Frobenius algebra. To prove the associativity of $\mu$, for example, one uses the glueing axiom together with Figures \ref{assoc1} and \ref{assoc2}.

  \begin{figure}[h!]
 \centering
  \includegraphics[width=0.4\textwidth]{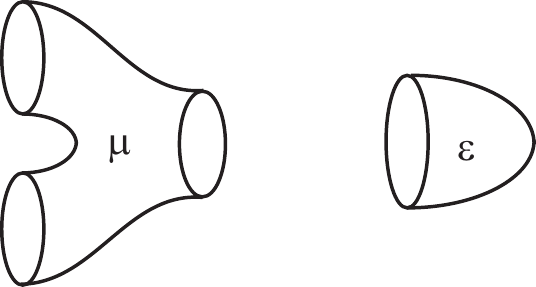}
 \caption{The multiplication $\mu \colon A \otimes A \to A$ and the trace map $\theta:A\to k$ for $A=Z(1)$.}\label{MultUnit}
 \end{figure}

 \begin{figure}[h!]
 \centering
  \includegraphics[width=0.2\textwidth]{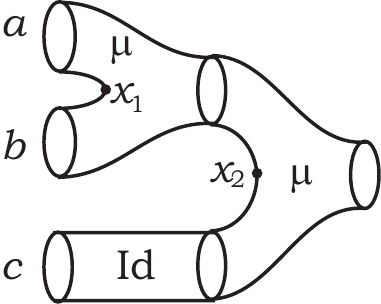}
 \caption{$(ab)c$}\label{assoc1}
 \end{figure}
 
 \begin{figure}[h!]
 \centering
  \includegraphics[width=0.2\textwidth]{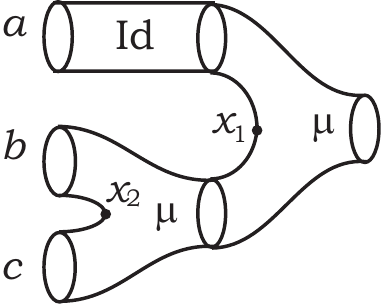}
 \caption{$a(bc)$}\label{assoc2}
 \end{figure}

 \begin{figure}[h!]
 \centering
  \includegraphics[width=0.4\textwidth]{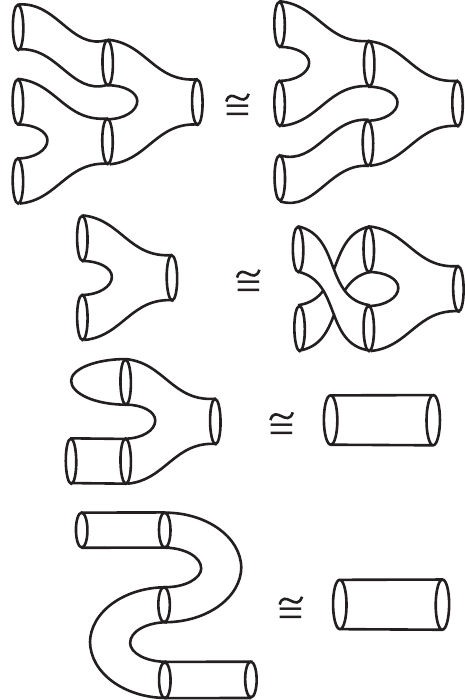}
 \caption{Each one of these cobordisms in $\Cob$ implies an algebraic property of $A$. From top to bottom: $a(bc)=(ab)c $, $ab=ba$, $1a=a$ and finally, the non-degeneracy of $\theta$}\label{AxiomsFrob}
 \end{figure}

It is a famous theorem that this construction defines an isomorphism of categories:
\begin{equation}
 F \colon \TQFT \to \Frob,
\end{equation} \label{Folk}
form the category of TQFT2 to the category of Frobenius algebras (e.g. \cite{MooreSegal}).

  So far, the algebras $A$ thus obtained could be non-commutative; for instance, the matrix algebra $A=\Mat_{n\times n}(k)$ together with $\theta(a_{ij}) = \sum_i a_{ii}$ is an example of this situation. From now on, we will add the braiding morphism $\tau : 2 \to 2$ in $\Cob$, having the effect at the level of vector spaces of adding a structure map to $A$ of the form, $Z(\tau)\colon A\otimes A \to A \otimes A$, given by $Z(\tau)(a\otimes b ) = (-1)^{|a||b|} b \otimes a$. In other words, all the algebras $A$ that we will consider in the following will be super-commutative (second row of Figure \ref{AxiomsFrob}).
  
To proof of the isomorphism of categories $\TQFT \cong \Frob$ one has to show that given a Frobenius algebra $(A,\theta)$, one can reconstruct the linear map $Z(n,g,m)$ for any given $\Sigma=(n,g,m)$. To do this one first decomposes by induction $\Sigma$ into the elementary cobordisms $(2,0,1)$, $(0,0,1)$, $(1,0,1)$, $(1,0,0)$ and $(1,0,2)$. That such a decomposition exists is easy to prove using Morse theory, but one must bear in mind that it is far from unique. 

\begin{figure}[h!]
 \centering
  \includegraphics[width=0.5\textwidth]{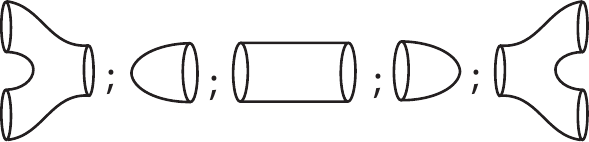}
 \caption{The elementary cobordisms $(2,0,1)$, $(0,0,1)$, $(1,0,1)$, $(1,0,0)$ and $(1,0,2)$. They correspond under the functor $Z$ to the maps $\mu=Z(2,0,1)$, $u=Z(0,0,1)$, $\Id_A=Z(1,0,1)$, and their duals, $\theta=Z(1,0,0)$ and $\Delta=Z(1,0,2)$.}\label{ElementaryCobs}
 \end{figure}

Notice that the multiplicative unit element $1 \in A$ can be thought of as a map $u: k \to A$ written as $u(\lambda) = \lambda \cdot 1 \in A$, and therefore as $u=Z(0,0,1)$. The topological interpretation of the unital property of $1\in A$ is depicted in the third row of Figure \ref{AxiomsFrob}.

Using the canonical isomorphism $A\cong A^*$ we can write maps $\theta=Z(1,0,0)=Z(0,0,1)^*=u^*$ and $\Delta=Z(1,0,2)=Z(2,0,1)^*=\mu^*$. It is reasonable to expect $\Delta$ to be a co-product with co-unit $\theta$, and indeed this is the case.

 A co-associative, co-commutative, co-unital co-product $\Delta \colon A \to A\otimes A$ on a $k$-vector space $A\in \Vect$ is the same thing as an associative, commutative, unital product on the corresponding element $A \in \Vect^{\op}$ of the opposite category. In other words, for example, the diagram that defines co-associativity for $A$:
$$
\xymatrix{
A\ar[r]^{\Delta}\ar[d]_{\Delta}& A\otimes A\ar[d]^{\Delta \otimes 1}\\
A\otimes A\ar[r]_{1\otimes \Delta}&A\otimes A\otimes A}
$$
is obtained by inverting the arrows of the usual diagram for associativity of a product.

We will write $\Delta(x)=\sum x_1\otimes x_2$ rather than $\Delta(x)=\sum_i (x_1)_i \otimes (x_2)_i$ omitting at every chance the summation indices. We prefer the use of diagrams, for even with this notational simplification, the co-associtivity property for $\Delta$ looks complicated in explicit form:
$$
 (\Delta \otimes1)\bigl(\Delta (x)\bigr)  =  \sum x_{11}\otimes x_{12}\otimes x_2  =\sum x_1\otimes x_{21}\otimes x_{22}=(1\otimes\Delta)\bigl(\Delta(x)\bigr).
$$
There is a third way of representing this co-associativity: using topological cobordisms in $\Cob$. Just as the usual associativity of $A$ can be interpreted as the equality between the two different factorizations of $Z(3,0,1)$ given by the sliding the saddle point $x_1$ past the saddle point $x_2$ in Figures \ref{assoc1} and \ref{assoc2}, the co-associativity of $\Delta$ is likewise proved by means of Figure \ref{Coassoc}: it amounts to the ability to slide the saddle point $x_2$ past the saddle point $x_1$.

\begin{figure}[h!]
 \centering
  \includegraphics[width=0.5\textwidth]{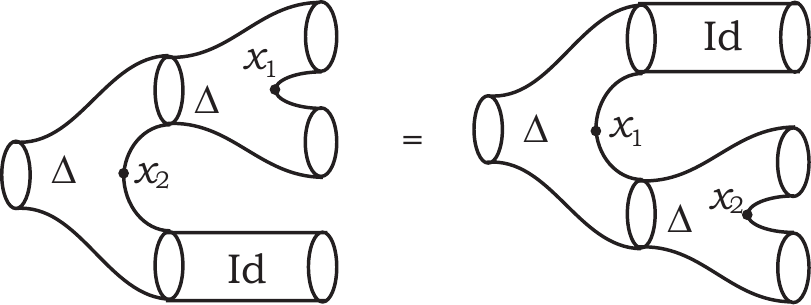}
 \caption{Co-associativity for $A$: $
 (\Delta \otimes1)\bigl((\Delta (x))\bigr)  =  (1\otimes\Delta)\bigl(\Delta(x)\bigr).
$}\label{Coassoc}
 \end{figure}

From the discussion above, we have managed to associate an operator $Z(n,g,m)$ to every decomposition of $\Sigma=(n,g,m)$ into elementary cobordisms, but we do not know that this operator does not depend on the decomposition; in fact, it doesn't. The proof of this independence can be divided into two steps: the first step being an algebraic lemma, and the second step being an argument in combinatorial topology. The algebraic lemma is as follows (cf. \cite{Abrams}):

\begin{lemma}\label{AlgLemma}
Given a fixed k-algebra $(A,\mu,1)$, there is a one-to-one correspondence between Frobenius structures $\theta\colon A\to k$ on $A$, and co-associative, co-commutative, co-unital co-products $\Delta \colon A \to A\otimes A$ that happen to be $A$-bi-module maps.
\end{lemma}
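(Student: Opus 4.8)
The plan is to construct the two maps explicitly and show they are mutually inverse. Given a Frobenius trace $\theta\colon A\to k$, the induced non-degenerate pairing $\langle a|b\rangle=\theta(ab)$ yields a canonical isomorphism $\phi\colon A\to A^*$, $\phi(a)=\langle a|-\rangle$, and I would define $\Delta:=(\phi^{-1}\otimes\phi^{-1})\circ\mu^*\circ\phi$, where $\mu^*\colon A^*\to A^*\otimes A^*$ is the transpose of multiplication under the identification $(A\otimes A)^*\cong A^*\otimes A^*$. Equivalently and more usefully, I would write $\Delta$ implicitly via $\Delta(x)=\sum x_1\otimes x_2$ characterized by $\sum\theta(x_1 a)\,x_2=xa$ for all $a$ (or symmetrically $\sum x_1\theta(a x_2)=ax$), which is exactly the statement that $\Delta$ is the image of $x$ under the composite above. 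In the opposite direction, given a co-associative, co-commutative, co-unital co-product $\Delta$ that is an $A$-bimodule map, I would set $\theta:=\varepsilon$, the co-unit of $\Delta$.

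First I would verify that the $\Delta$ defined from $\theta$ is an $A$-bimodule map: the bimodule relation $\Delta(ax)=(a\otimes 1)\Delta(x)=(1\otimes a)\Delta(xa)$ should follow directly from the defining identity $\sum\theta(x_1 b)x_2=xb$ by inserting $ax$ in place of $x$ and using associativity of $\mu$, together with the commutativity (super-commutativity) already imposed. Co-associativity and co-commutativity of $\Delta$ I would check by pairing against $\theta$ in all slots and reducing to associativity and commutativity of $\mu$; this is where the non-degeneracy of $\langle\,|\,\rangle$ is used to conclude equality of elements from equality of all their pairings. For the co-unit, I would show $(\theta\otimes 1)\Delta(x)=x=(1\otimes\theta)\Delta(x)$, which is immediate from the defining identity by setting $b=1$.

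Conversely, starting from a bimodule co-product $\Delta$ with co-unit $\varepsilon$, I would show $\theta:=\varepsilon$ is a Frobenius trace, i.e.\ that $\langle a|b\rangle=\varepsilon(ab)$ is non-degenerate. The key computation is that the bimodule property forces $\Delta(x)=\Delta(1)\cdot(1\otimes x)=(x\otimes 1)\cdot\Delta(1)$, so writing $\Delta(1)=\sum e_i\otimes f_i$ gives a \emph{Casimir-type element} $\sum e_i\otimes f_i$ satisfying $\sum e_i\,\varepsilon(f_i x)=x=\sum \varepsilon(x e_i)f_i$; this exhibits, for each $x\neq 0$, some $f_i$ (resp.\ $e_i$) pairing non-trivially with $x$, hence non-degeneracy. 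Finally I would check the two constructions are inverse to each other: starting from $\theta$, forming $\Delta$, and taking its co-unit returns $\theta$ (clear from the co-unit computation above); and starting from $\Delta$, taking $\varepsilon$, and re-forming the co-product returns $\Delta$ (because the re-formed co-product satisfies the same defining identity $\sum x_1\theta(x_2 a)=xa$, which by non-degeneracy determines it uniquely, and $\Delta$ itself satisfies it thanks to the bimodule property and the co-unit axiom).

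The main obstacle I expect is bookkeeping rather than conceptual: one must be careful that the two a priori different defining identities for $\Delta$ (pairing on the left versus the right leg) agree, which uses commutativity, and one must consistently track the canonical identification $A\cong A^*$ so that ``$\Delta=\mu^*$'' is interpreted correctly. The one genuinely substantive point is the implication ``bimodule co-product $\Rightarrow$ non-degenerate pairing'': it is precisely the requirement that $\Delta$ be a \emph{bimodule} map (not merely co-associative and co-unital) that produces the Casimir element $\Delta(1)$ and hence forces non-degeneracy — without that hypothesis the correspondence fails, and making this dependence transparent is the heart of the proof.
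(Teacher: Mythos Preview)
Your proposal is correct and follows essentially the same route as the paper: in one direction you take the co-unit of $\Delta$ as $\theta$ and deduce non-degeneracy from the bimodule identity via the Casimir element $\Delta(1)$, which is exactly the algebraic content of the paper's ``capping off'' picture; in the other direction you set $\Delta=\mu^*$ under the isomorphism $A\cong A^*$, just as the paper does. Your writeup is considerably more explicit than the paper's sketch (you actually verify co-associativity, the bimodule property, and the mutual-inverse claim), but the underlying strategy is identical.
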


The bi-module action on $A\otimes A$ on decomposable elements is described by:
$$ a(b\otimes b') a' = (ab) \otimes (b'a').$$

To prove the lemma, it is very easy to see that having $\Delta$ its co-unit map $A\to k$ is precisely $\theta$: the non-degeneracy of the trace is an immediate consequence of the   following two commutative diagrams relating the product and the coproduct: 

\begin{equation*}
	\xymatrix@C=1cm@R=1cm{
		{A}\otimes{A}\ar[r]^\mu\ar[d]_{1\otimes\de}&{A}\ar[d]^\de&{A}\otimes{A}\ar[r]^{\mu}\ar[d]_{\de\otimes 1}&{A}\ar[d]^{\de}\\
		{A}\otimes{A}\otimes{A}\ar[r]_{\mu\otimes
			1}&{A}\otimes{A}&{A}\otimes{A}\otimes{A}\ar[r]_{1\otimes \mu}&{A}\otimes{A} }
\end{equation*}

Both diagrams can be better encoded in figures \ref{FrobIden1} and \ref{FrobIden2}.

\begin{figure}[h!]
	\centering
	\includegraphics[width=0.3\textwidth]{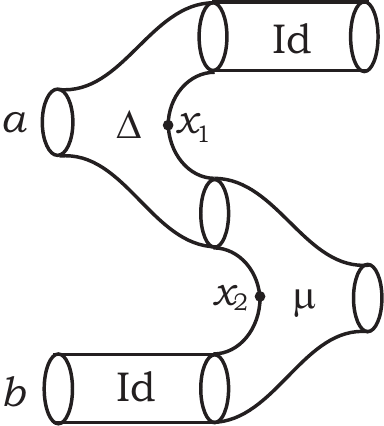}
	\caption{$a\otimes b\longmapsto\Delta(a)b$}\label{FrobIden1}
\end{figure}

\begin{figure}[h!]
	\centering
	\includegraphics[width=0.3\textwidth]{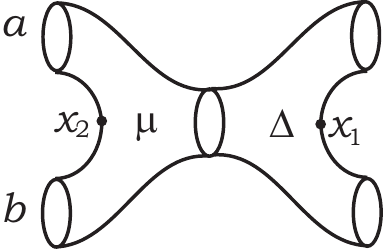}
	\caption{$ a\otimes b\longmapsto\Delta(ab)$}\label{FrobIden2}
\end{figure}

What these pictures (equivalently, the above commutative diagrams) tell us  is that we can ``slide'' the critical point $x_1$ past $x_2$ exchanging their positions. In any case, by capping off with a unit a input boundary and with a co-unit an output boundary component, we can readily imply the non-degeneracy of the trace.  

Conversely if we have a non-degenerate trace, as a consequence we have that $A\cong A^*$ and therefore we can simply define $\Delta$ as the dual of the product, $\mu^*$.  

Lemma  \ref{AlgLemma} above is then equivalent to the assertion that we are allowed to slide critical points past one another in a given cobordism, changing in the process the corresponding pair-of-pants decomposition. We make this more explicit shortly. 

To complete the proof of the fact that there is an equivalence of categories  $\Frob \to  \TQFT$, we need to use Cerf-Morse's theory \cite{Cerf}. Remember, this will be proved once we know that the linear mapping associated to a cobordism do not depend on the particular pair-of-pants decomposition used to define it.

Let us briefly review the consequences of Cerf's theory that we will be using. First we fix the topological surface  with boundary $\Sigma$. We will consider the (connected) space $\mathcal{M}$ of pairs $(\Sigma, f)$ where $f:\Sigma \to \mathbb{R}$ is a Morse function so that $f$ restricted to the input boundary components has constant value 0, and restricted to the output boundary components has constant value 1. To such a pair we can associate a well-defined pants-decomposition of $\Sigma$ by cutting it up in between critical points (such points will be labeled $x_1,\ldots, x_r$, and will be ordered by the value that $f$ takes on them). This setting is depicted in figure \ref{Canonical}  below.

\begin{figure}[h!]
	\centering
	\includegraphics[width=0.5\textwidth]{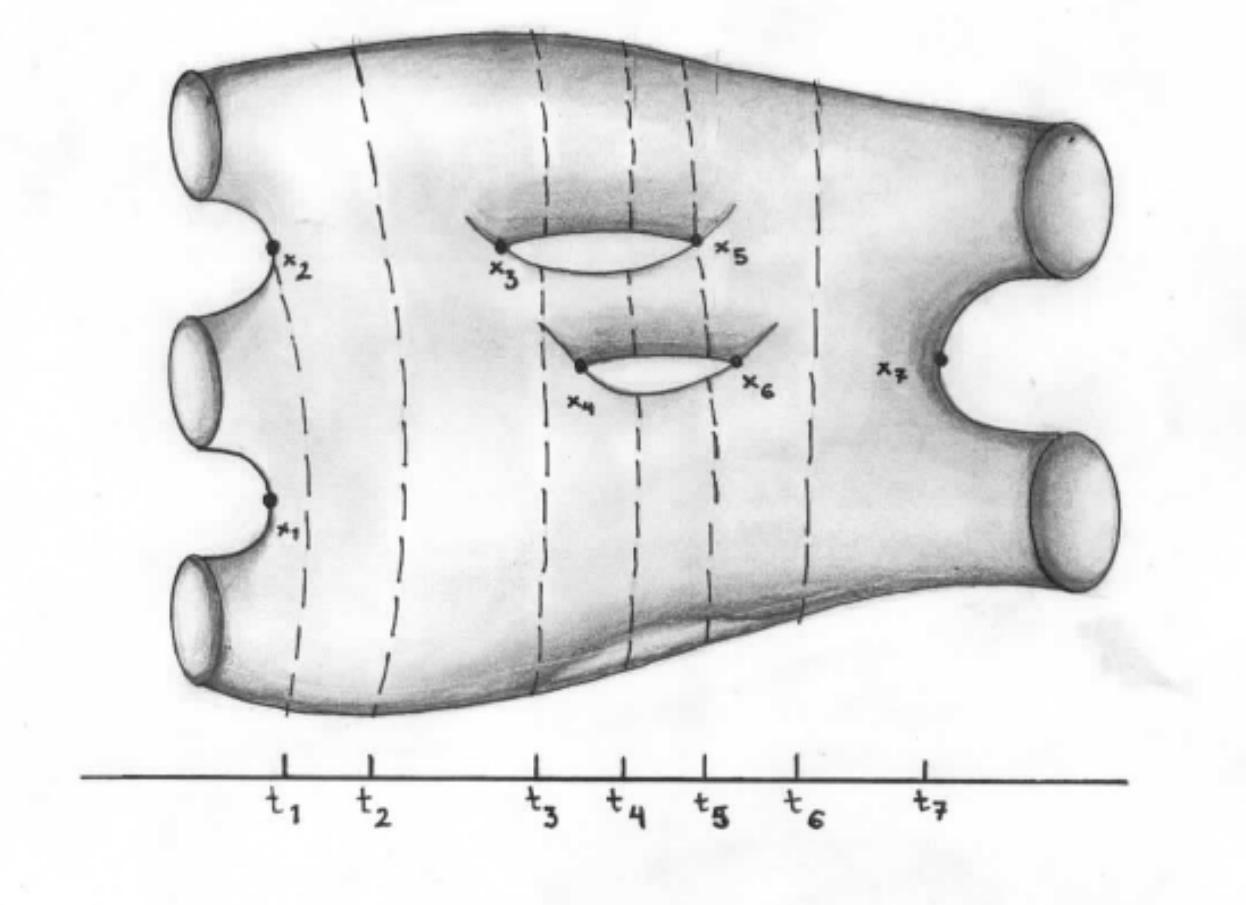}
	\caption{To every Morse function $f$ with distinct critical points we associate a decomposition of the surface $\Sigma$. }\label{Canonical}
\end{figure}

As we vary $f$ in $\mathcal{M}$, the pants-decomposition changes: this happens  as any two consecutive critical points $x_i,x_{i+1}$, $f(x_i)<f(x_{i+1})$, cross a wall  (such wall defined by the condition that $f$ takes the same value on both points $f(x_i) = f(x_{i+1})$), and then, as $f$ changes, they exchange places $x_i \leftrightarrow x_{i+1}$, $f(x_{i+1})<f(x_i)$. It is a remarkable consequence of Cerf's theory that this procedure \emph{connects all possible pair-of-pants decompositions}; all of them can be reached by exchanging critical points. This happens because the connected space of Morse functions is divided into thick chambers of functions for which $f(x_i) \neq f(x_j)$ for all pairs $i \neq j$; the complement of this generic condition forms walls, and moreover, it is enough to cross a finite number of walls to get from any $f_0$ to any other $f_1$ in the space $\mathcal{M}$, for example every decomposition can always be taken into a "canonical form" as in figure \ref{Crossing}. 

\begin{figure}[h!]
	\centering
	\includegraphics[width=0.9\textwidth]{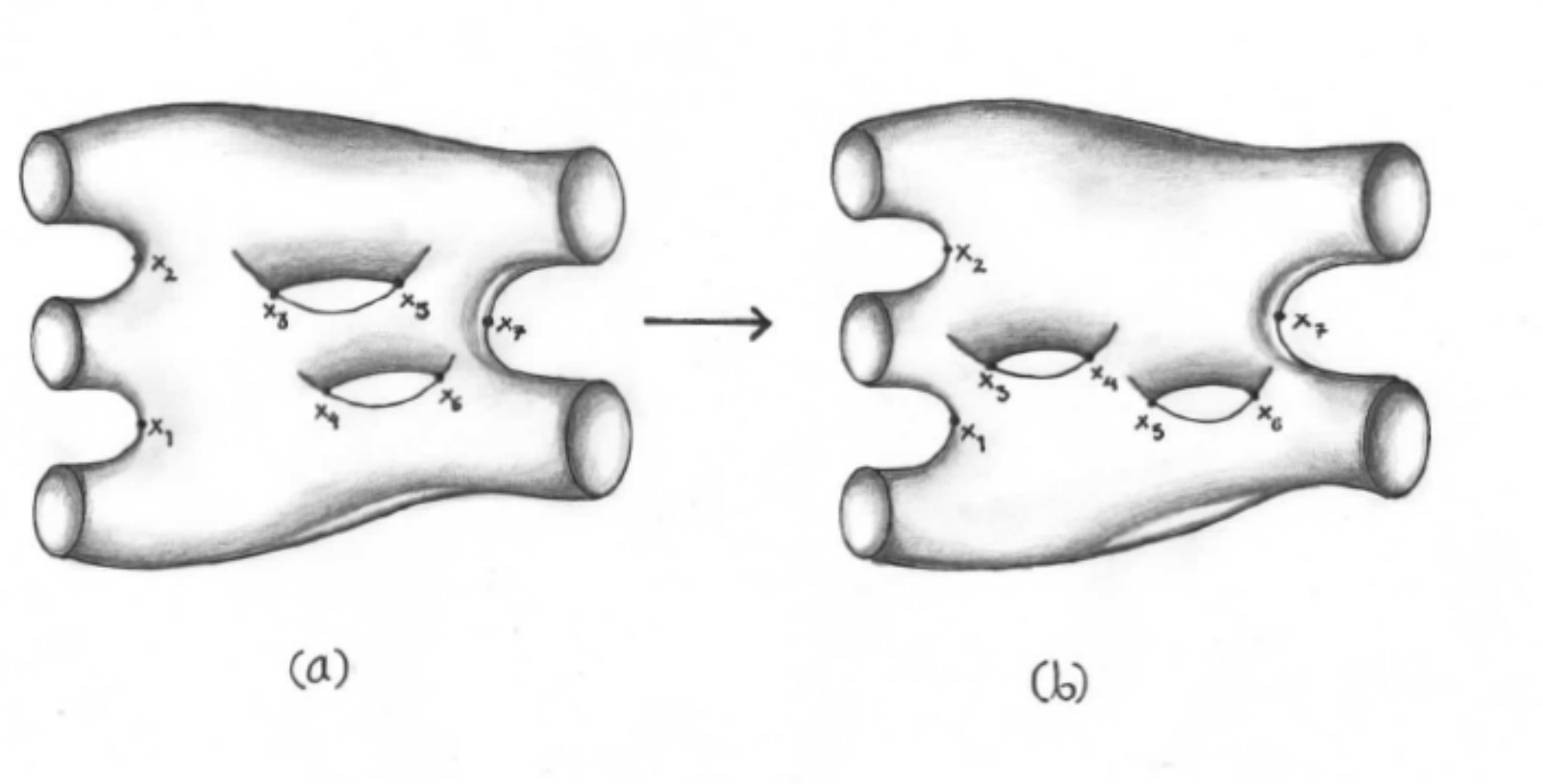}
	\caption{Two critical points $x_4$ and $x_5$ cross as we move $f$ in $\mathcal{M}(Z)$, taking $f_a$ into its final canonical form $f_b$. The algebraic change between the decomposition (a) and (b) is calculated by Lemma \ref{AlgLemma}. }\label{Crossing}
\end{figure}

As we have seen, given a TQFT, defining a Frobenius algebra structure on the vector space $A$ associated to the connected boundary circle is very straighforward. Conversely, to associate a TQFT to a Frobenius algebra in a functorial way we proceed as follows: given a Frobenius algebra $A$ and a cobordism $\Sigma$ (with $n$ incoming circles and $m$ outgoing circles) we pick any $f$ whatsoever in $\mathcal{M}$, and then we use the correspoding pair-of-pants decomposition induced in $\Sigma$ to define the multi-linear operator $A^n \to A^m$. Finally because of the algebraic lemma \ref{AlgLemma} and Cerf's theory we immediately conclude that the assigment is well defined (independent of the decomposition) and functorial, finishing the proof that that there is an equivalence of categories  $\Frob \to  \TQFT$.

\section{Nearly Frobenius algebras}

We are ready to define the main object of this paper: nearly Frobenius algebras.

\begin{defn}
	A \emph{nearly Frobenius algebra} is a pair $((A,\cdot),\Delta)$ of an algebra (over a field $k$) together with a co-associative (possibly not co-unital) co-multiplication $\Delta: A\otimes A \to A$ that is a bi-module map form $A \otimes A$ to $A$.  
\end{defn}

The very last property of the definition can be succinctly defined by the \emph{Frobenius conditions} (cf. figures \ref{FrobIden1} and \ref{FrobIden2}):
$$ a\Delta(b) = \Delta(ab) = \Delta(a)b.$$

Some examples are in order.

\begin{exmp} First, every Frobenius algebra $(A,\mu,\theta)$ is a nearly Frobenius algebra. For $\theta$ induces an isomorphism $A\cong A^*$ and the dual of the multiplication, $\Delta := \mu^*$, is the co-multiplication of the nearly Frobenius algebra structure. It is straighforward to check the Frobenius identities for this example in view of \ref{AlgLemma}. 
\end{exmp}

Not every nearly Frobenius algebra is a Frobenius algebra, the easiest example is the algebra for which the multiplication $\mu$ is identically zero: the identically zero co-multiplication $\Delta := 0$ makes it into a nearly Frobenius algebra. A more interesting example is below, while Frobenius algebras have an essentially unique Frobenius trace, nearly Frobenius algebras can have a whole space of nearly Frobenius co-products.

\begin{exmp}
	Consider $A$ to be a truncated polynomial algebra in one variable
	$K[x]/x^{n+1}$m for $K$ a field. We can endow $A$ with many nearly Frobenius algebra structures.
	
	We will fix the basis $\{1,x,\dots,x^n\}$ for $A$.  Any  $k$-linear map $\Delta:A \to  A\otimes A$ evaluated at
	$1$ takes the form:
	$$\Delta(1)=\sum_{i,j=1}^n a_{ij} x^i\otimes x^j.$$
	To make this map into an $A$-bimodule morphism we need the following to hold:
	\begin{equation}\label{equation}
	\Delta(x^k)=(x^k\otimes 1)\Delta(1)=\Delta(1)(1\otimes
	x^k),\quad\forall\;k\in\{0,\dots,n\}.\end{equation} 
	
	Specializing to $k=1$ we get
	$$\sum_{i,j=1}^na_{ij}x^{i+1}\otimes x^j=\sum_{i,j,=1}^na_{ij}x^i\otimes x^{j+1}.$$
	Which occurs when $a_{0,j-1}=0$, $j=1,\dots,n$; $a_{i-1,0}=0$,
	$i=1,\dots,n$ and $a_{i,j-1}=a_{i-1,j}$ otherwise.  Then
	$$\Delta(1)=\sum_{k=0}^na_{kn}\left(\sum_{i+j=n+k}x^i\otimes x^j\right).$$
	We will define $a_k:=a_{kn}$. To conclude  that $\Delta$ is an $A$-bimodule morphism, we need to
	show that $\Delta(x^k)=\bigl(x^k\otimes 1\bigr)\Delta(1)=\Delta(1)\bigl(1\otimes
	x^k\bigr)$.
	
	$$\begin{array}{rcl}
	\Delta(1)\bigl(1\otimes x^l\bigr) & = & \displaystyle{\sum_{k=0}^na_{k}\left(\sum_{i+j=n+k}x^i\otimes x^j\right)\bigl(1\otimes x^l\bigr)}
	=  \displaystyle{\sum_{k=0}^na_{k}\left(\sum_{i+j=n+k}x^i\otimes x^{j+l}\right)} \\
	& = & \displaystyle{\sum_{k=0}^na_{k}\left(\sum_{i+m=n+k+l}x^i\otimes x^m\right)}
	=  \displaystyle{\sum_{k=0}^na_{k}\left(\sum_{r+m=n+k}x^{r+l}\otimes x^m\right)} \\
	& = & \displaystyle{\bigl(x^l\otimes 1\bigr)\sum_{k=0}^na_{k}\left(\sum_{r+m=n+k}x^r\otimes x^m\right)}
	=  \displaystyle{\bigl(x^l\otimes 1\bigr)\Delta(1)}.
	\end{array}
	$$
	
		Next we verify the coassociativity axiom: Let $x^l\in A$
	with $l\geq 0$.
	$$
	\begin{array}{rcl}
	\bigl(\Delta\otimes 1\bigr)\bigl(\Delta\bigl(x^l\bigr)\bigr)  & = & \displaystyle{\bigl(\Delta\otimes 1\bigr)\left(\sum_{k=0}^na_{k}\left(\sum_{i+j=n+k+l}x^i\otimes x^j\right)\right)  = \sum_{k=0}^na_{k}\left(\sum_{i+j=n+k+l}\Delta\bigl(x^i\bigr)\otimes x^j\right)} \\
	& = & \displaystyle{\sum_{k,m=0}^na_ka_m\left(\sum_{i+j=n+k+l}\sum_{r+s=n+m+i}x^r\otimes x^s\otimes x^j\right)} \\
	& = & \displaystyle{\sum_{k,m=0}^na_ka_m\left(\sum_{r+s+j=2n+m+k+l}x^r\otimes x^s\otimes x^j\right)} \\
	\bigl(1\otimes\Delta\bigr)\bigl(\Delta\bigl(x^l\bigr)\bigr)  & = &  \displaystyle{\bigl(1\otimes\Delta)\left(\sum_{k=0}^na_{k}\left(\sum_{i+j=n+k+l}x^i\otimes x^j\right)\right)  = \sum_{k=0}^na_{k}\left(\sum_{i+j=n+k+l}x^i\otimes \Delta\bigl(x^j\bigr)\right)}\\
	& = & \displaystyle{\sum_{k,m=0}^na_ka_m\left(\sum_{i+j=n+k+l}\sum_{r+s=n+m+j}x^i\otimes x^r\otimes x^s\right)} \\
	& = & \displaystyle{\sum_{k,m=0}^na_ka_m\left(\sum_{r+s+j=2n+m+k+l}x^r\otimes x^s\otimes x^j\right)}. \\
	\end{array}
	$$
	
	Therefore $\bigl(A,\Delta\bigr)$ is a nearly-Frobenius algebra. Moreover, any coproduct $\Delta$ is a linear combination
	of the coproducts $\Delta_k$ defined by the formula:
	$$\Delta_k\bigl(x^l\bigr)=\sum_{i+j=n+k+l}x^i\otimes x^j,\quad\mbox{for}\; k\in\{0,\dots,n\}$$

	Notice that $\Delta_0$ is the Frobenis coproduct of $A$ where the trace
	map $\theta:A\to\mathbb{C}$ is given by
	$\theta\bigl(x^i\bigr)=\delta_{i,n}$. The remaining coproducts,
	$\Delta_k$ $k\neq 0$, do not come from a Frobenius algebra structure: for $k\neq 0$, it doesn't exist a trace map $\theta:A\to \mathbb{C}$ such that
	$\bigl(A, \de_k,\theta\bigr)$ is a Frobenius algebra. Indeed, otherwise:
	$$m(\theta\otimes 1)\bigl(\de_k\bigl(x^l\bigr)\bigr)=\sum_{i+j=n+k+l}\theta\bigl(x^i\bigr)x^j,$$
	with $j>l$, so $m(\theta\otimes 1)\de_k\bigl(x^l\bigr)\neq x^l.$
\end{exmp}

In view of the previous example, we make a definition.

\begin{defn} The \emph{Frobenius dimension} $\Frobdim(A)$ of an algebra $(A,\mu)$ is the dimension of the moduli variety $\mathcal{N}(A)$ of nearly Frobenius structures $\Delta$ on $A$ which are compatible with $\mu$.
\end{defn}  

In the previous example the Frobenius dimension of $A=\mathbb{C}[x]/x^{n+1}$ is $n+1$ and coincides with the dimension of $A$ as a vector space over $\mathbb{C}$.

While Frobenius algebras are bound to be finite dimensional, nearly Frobenius algebras are free to be infinite dimensional.

\begin{exmp}
	Let us consider $A$ to be the algebra  $\mathbb{C}\bigl[\bigl[x,x^{-1}\bigr]\bigr]$
	of formal Laurent series. The coproducts given by:
	$$\Delta_{j}\bigl(x^{i}\bigr)=\sum_{k+l=i+j} x^{k} \otimes x^{l}$$
	 define infinitely many nearly Frobenius structures on $A$ that do not come
	from a Frobenius structure.
\end{exmp}

\begin{theorem}\label{t1}
	If $\bigl(A,\de_1\bigr)$ and $\bigl(B,\de_2\bigr)$ are
	nearly-Frobenius algebras then $\bigl(A\ot B, \de\bigr)$  is a
	nearly-Frobenius algebra where
	$$\de=(1\ot\tau\ot 1)\circ\bigl(\de_1\ot\de_2\bigr),\quad\mbox{(here}\; \tau\;\mbox{is the transposition).}$$
\end{theorem}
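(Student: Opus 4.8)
The plan is to verify directly the two axioms defining a nearly Frobenius algebra for the pair $(A\ot B,\de)$: coassociativity of $\de$, and the Frobenius identities $x\,\de(y)=\de(xy)=\de(x)\,y$ for $x,y\in A\ot B$. Here $A\ot B$ carries its standard tensor product algebra structure, and every reshuffling of tensor factors below is performed with the symmetry $\tau$, which in the graded setting carries the appropriate Koszul sign. Since $\de$, the multiplication of $A\ot B$, and everything appearing in the two axioms are built solely from $\mu_A,\mu_B,\de_1,\de_2$ and from the associativity and symmetry isomorphisms, the whole argument is an instance of the coherence theorem for symmetric monoidal categories: there is essentially no conceptual content, only the bookkeeping of which tensor slot goes where, and I would present both verifications as short string-diagram chases in the symmetric monoidal category of (graded) $k$-vector spaces.

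For coassociativity I would compute the two composites $(\de\ot 1)\circ\de$ and $(1\ot\de)\circ\de$, both maps $A\ot B\to (A\ot B)^{\ot 3}$. Unwinding $\de=(1\ot\tau\ot 1)\circ(\de_1\ot\de_2)$ and tracking where each factor lands, which amounts to repeated use of the naturality of $\tau$, one finds that each composite equals the ``perfect shuffle'' permutation $A_1A_2A_3B_1B_2B_3\mapsto A_1B_1A_2B_2A_3B_3$ precomposed with $\bigl((\de_1\ot 1)\de_1\bigr)\ot\bigl((\de_2\ot 1)\de_2\bigr)$ in the first case and with $\bigl((1\ot\de_1)\de_1\bigr)\ot\bigl((1\ot\de_2)\de_2\bigr)$ in the second. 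These two maps are equal because $\de_1$ and $\de_2$ are each coassociative, so $(\de\ot 1)\de=(1\ot\de)\de$. Diagrammatically, the only difference between the two pictures is whether the $A$- and $B$-comultiplications are iterated on the left or on the right leg, which is exactly the ambiguity that coassociativity of $\de_1$ and $\de_2$ removes.

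For the Frobenius identities, since the two conditions play symmetric roles it suffices to prove $\de(xy)=x\,\de(y)$ for $x=a\ot b$ and $y=a'\ot b'$. Expanding the left-hand side, $\de\bigl((aa')\ot(bb')\bigr)=(1\ot\tau\ot1)\bigl(\de_1(aa')\ot\de_2(bb')\bigr)$, and applying the Frobenius identity in $A$ and in $B$ to rewrite $\de_1(aa')=a\,\de_1(a')$ and $\de_2(bb')=b\,\de_2(b')$, this becomes $(1\ot\tau\ot1)\bigl((a\,\de_1(a'))\ot(b\,\de_2(b'))\bigr)$. The right-hand side is $(a\ot b)\cdot(1\ot\tau\ot1)\bigl(\de_1(a')\ot\de_2(b')\bigr)$, where the left $(A\ot B)$-module action on $(A\ot B)^{\ot 2}=A\ot B\ot A\ot B$ multiplies $a$ into the first slot and $b$ into the second. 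The two expressions agree because $1\ot\tau\ot1$ is an isomorphism of left $(A\ot B)$-modules, intertwining the action ``multiply $a$ into the leftmost $A$-slot and $b$ into the leftmost $B$-slot'' before and after the reshuffle, and because the Koszul sign produced when $b$ passes the first $A$-slot is exactly the sign carried by $\tau$; a short computation in Sweedler-type notation makes the two sides identical term by term. The remaining identity $\de(xy)=\de(x)\,y$ is proved in the same way using the right $(A\ot B)$-module actions.

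The one genuine, if minor, obstacle is the sign-and-permutation bookkeeping in the graded case: one must confirm that the Koszul signs introduced when homogeneous elements of $A$ are commuted past those of $B$ are exactly those already encoded by the chosen braidings $\tau$, so that both sides of each axiom carry the same sign. Working with string diagrams in the symmetric monoidal category of graded $k$-vector spaces makes this automatic, and the proof then reduces to the two diagram chases described above.
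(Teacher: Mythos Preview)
Your proposal is correct and follows essentially the same approach as the paper: both verify coassociativity and the Frobenius identities for $\de$ by reducing them, via the naturality of the transposition $\tau$, to the corresponding axioms for $\de_1$ and $\de_2$ separately. The paper packages this as two large commutative diagrams whose outer rectangles commute because the inner subrectangles do, while you unwind the same chase in element/Sweedler notation (and string-diagram language); your additional remarks on Koszul signs in the graded case are a welcome clarification the paper leaves implicit.
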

\begin{proof}
	The map $\de$ is coassociative because the external diagram is
	commutative since the internal diagrams commute:
	$$\xymatrix@R=1.2cm@C=1.5cm{
		A\ot B\ar[r]^{\de_1\ot\de_2}\ar[d]_{\de_1\ot\de_2}&A\ot A\ot B\ot
		B\ar[r]^{1\ot\tau\ot 1}\ar[d]^{\de_1\ot 1\ot\de_2\ot 1}& (A\ot
		B)\ot(A\ot B)\ar[d]^{\de_1\ot\de_2\ot 1\ot 1}\\
		A\ot A\ot B\ot B\ar[r]^{1\ot\de_1\ot 1\ot\de_2}\ar[d]_{1\ot\tau\ot
			1}&A\ot A\ot A\ot B\ot B\ot B\ar[r]^{1\ot\tau\ot
			1}\ar[d]^{1\ot\tau\ot 1}&A\ot A\ot B\ot B\ot A\ot B\ar[d]^{1\tau\ot
			1\ot 1\ot 1}\\
		A\ot B\ot B\ot A\ot B\ar[r]_{1\ot 1\ot 1\ot\de_1\ot\de_2}&A\ot B\ot
		A\ot A\ot B\ot B\ar[r]_{1\ot \tau \ot 1\ot 1\ot 1 \ot 1}&A\ot B\ot A\ot
		B\ot A\ot B
	}$$
	The linear map $\de$ satisfies the Frobenius identities because the
	next external diagram is commutative using that the internal
	diagrams commute:
	$$\xymatrix@R=1.2cm@C=2cm{
		(A\ot B)\ot(A\ot B)\ar[r]^{1\ot\tau\ot 1}\ar[d]_{\de_1\ot\de_2\ot
			1}&A\ot A\ot B\ot B\ar[r]^{m_1\ot m_2}\ar[d]^{\de_1\ot 1\ot \de_2\ot
			1}&
		(A\ot B)\ar[d]^{\de_1\ot\de_2}\\
		A\ot A\ot B\ot B\ot A\ot B\ar[r]^{1\ot\tau\ot 1}\ar[d]_{1\ot\tau\ot
			1\ot 1}&A\ot A\ot A\ot B\ot B\ot B\ar[r]^{1\ot m_1\ot 1\ot
			m_2}\ar[d]^{1\ot\tau\ot 1\ot 1}&A\ot A\ot B\ot B\ar[d]^{1\tau\ot
			1}\\
		A\ot B\ot A\ot B\ot A\ot B\ar[r]_{1\ot 1\ot \tau\ot 1}&A\ot B\ot
		A\ot A\ot B\ot B\ar[r]_{1\ot m_1\ot m_2}&A\ot B\ot A\ot B
	}$$
\end{proof}

Let be $\bigl(A, \de\bigr)$ a nearly-Frobenius algebra.
\begin{defn}
	A linear subspace $J$ in $A$ is called a \emph{nearly-Frobenius
		ideal } if
	\begin{enumerate}
		\item[(a)]$J$ is an ideal of $A$ and
		\item[(b)] $\de(J)\subset J\ot A + A\ot J$.
	\end{enumerate}
\end{defn}

\begin{prop}\label{p1}
	Let be $\bigl(A,\de\bigr)$ a nearly-Frobenius algebra, $J$ a
	nearly-Frobenius ideal  and $p:A\rightarrow A/J$ the natural
	projection. Then, $A/J$ admits a unique nearly-Frobenius structure
	such that $p$ is a coalgebra morphism.
\end{prop}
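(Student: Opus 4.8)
The plan is to obtain the co-product on $A/J$ by descending $\de$ along $p$, and then to verify all the axioms by elementary diagram chasing, using that $p$ is a surjective algebra map. Since $J$ is a two-sided ideal, $A/J$ is already an algebra and $p$ an algebra homomorphism, so I only need to produce the co-multiplication. Working over a field, the functor $-\ot_k-$ is exact, hence $p\ot p\colon A\ot A\to (A/J)\ot(A/J)$ is surjective with kernel exactly $J\ot A+A\ot J$. Condition (b) in the definition of a nearly-Frobenius ideal states $\de(J)\subset J\ot A+A\ot J=\ker(p\ot p)$, so $(p\ot p)\circ\de$ kills $J$ and therefore factors uniquely through $p$: there is a unique linear map $\ov\de\colon A/J\to (A/J)\ot(A/J)$ with $\ov\de\circ p=(p\ot p)\circ\de$. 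This last identity is exactly the assertion that $p$ is a morphism of co-algebras, so $\ov\de$ is forced, which settles uniqueness.

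Next I would check that $(A/J,\ov\de)$ is a nearly-Frobenius algebra. For co-associativity, I compose the defining relation with itself: $(\ov\de\ot 1)\circ\ov\de\circ p=(\ov\de\ot 1)\circ(p\ot p)\circ\de=(p\ot p\ot p)\circ(\de\ot 1)\circ\de$ and likewise $(1\ot\ov\de)\circ\ov\de\circ p=(p\ot p\ot p)\circ(1\ot\de)\circ\de$; co-associativity of $\de$ equates the right-hand sides, and since $p$ is surjective it can be cancelled on the left, giving co-associativity of $\ov\de$. For the Frobenius identities I use that $p$ is multiplicative and that $p\ot p$ intertwines the $A$-bimodule structure on $A\ot A$ with the $A/J$-bimodule structure on $(A/J)\ot(A/J)$: for $a,b\in A$, writing $\ov a=p(a)$, one has $\ov\de(\ov a\,\ov b)=\ov\de(p(ab))=(p\ot p)(\de(ab))=(p\ot p)(a\,\de(b))=\ov a\,(p\ot p)(\de(b))=\ov a\,\ov\de(\ov b)$, and symmetrically $\ov\de(\ov a\,\ov b)=\ov\de(\ov a)\,\ov b$; since every element of $A/J$ is some $\ov a$, these identities hold on all of $A/J$.

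The only step where a hypothesis is genuinely consumed — and hence the crux of the statement — is the well-definedness of $\ov\de$, which is precisely what condition (b) of a nearly-Frobenius ideal guarantees (together with exactness of the tensor product over $k$). Everything afterwards is a formal consequence of the surjectivity of $p$ and the corresponding properties of $\de$, so I expect no serious difficulty; the content of the proposition is really the observation that condition (b) is exactly the compatibility needed to push $\de$ down to the quotient.
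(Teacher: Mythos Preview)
Your proof is correct and follows essentially the same approach as the paper: factor $(p\ot p)\circ\de$ through $p$ using condition~(b), then derive co-associativity and the Frobenius identities from those of $\de$ via the surjectivity of $p$. Your write-up is in fact a bit more careful than the paper's --- you explicitly justify why the factoring map exists and is unique, and you spell out the cancellation-by-surjectivity step for co-associativity, which the paper leaves implicit.
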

\begin{proof}
	Since $(p\otimes p)\Delta(J)\subset(p\otimes p)\bigl(J\otimes
	A+A\otimes J\bigr)=0$, it follows that there exists a unique morphism of
	vector spaces $$\exists! \quad\overline{\Delta}:A/J\rightarrow
	A/J\otimes A/J$$ for which the diagram:
	$$\xymatrix{A\ar[r]^p\ar[d]_{\Delta}&A/J\ar[d]^{\overline{\Delta}}\\
		A \ar[r]_(.3){p\otimes p}& A/J\otimes A/J}$$ is commutative. This
	map is defined by $\overline{\Delta}(\overline{a})=\sum
	\overline{a_1}\otimes\overline{a_2}$ where $\overline{a}=p(a)$, i.e.
	$\overline{\Delta}=(p\otimes p)\circ \Delta$.
	
	The fact that   $\bigl(\overline{\Delta}\otimes
	1\bigr)\overline{\Delta}(\overline{a})=\bigl(1\otimes\overline{\Delta}\bigr)\overline{\Delta}(\overline{a})=\sum\overline{a_1}\otimes\overline{a_2}\otimes\overline{a_3}$
	follows immediately from the commutativity of the diagram.
	
	The coproduct is a bimodule morphism, for:
	$$\xymatrix{A/J\otimes A/J\ar[r]^{\overline{m}}\ar[d]_{\overline{\Delta}\otimes 1}&A/J\ar[d]^{\overline{\Delta}}\\
		A/J\otimes A/J\otimes A/J\ar[r]_(.6){1\otimes\overline{m}}&
		A/J\otimes A/J}\quad
	\xymatrix{A/J\otimes A/J\ar[r]^{\overline{m}}\ar[d]_{1\otimes\overline{\Delta}}&A/J\ar[d]^{\overline{\Delta}}\\
		A/J\otimes A/J\otimes A/J\ar[r]_(.6){\overline{m}\otimes 1}&
		A/J\otimes A/J}$$
	
	Notice that  $\Delta(a)=\sum a_1\otimes a_2$, $\Delta(b)=\sum
	b_1\otimes b_2$.
	
	$\overline{\Delta}\overline{m}(\overline{a}\otimes\overline{b})=\overline{\Delta}(p(ab))=(p\otimes
	p)\Delta(ab)=(p\otimes p)\bigl((1\otimes m)(\Delta\otimes
	1)(a\otimes b)\bigr)$ $=(p\otimes p)\bigl(\sum a_1\otimes
	a_2b\bigr)=\sum \overline{a_1}\otimes\overline{a_2b}$.
	
	Also, $(1\otimes\overline{m})(\overline{\Delta}\otimes
	1)(\overline{a}\otimes\overline{b})=(1\otimes\overline{m})\bigl(\sum\overline{a_1}\otimes\overline{a_2}\otimes\overline{b}\bigr)=\sum\overline{a_1}\otimes\overline{a_2b}$.
	Making the first diagram commutative.
\end{proof}

\section{Nearly Frobenius Semisimple Algebras}
Whenever $A$ is a semisimple algebra we can fully classify all compatible nearly Frobenius structures on $A$.\\
\subsection{Non-commutative fields}
	Let $\k$ be a non-commutative field with center $Z(\k)$.\\
	If a linear map $\de:\k\rt\k$ satisfies the Frobenius identities
	then we have: $$\de(x)=\de(1)(1\ot x)=(x\ot 1)\de(1),\quad\forall\;x\in\k.$$
	Writing the coproduct in an anzats: $\de(1)=a1\ot 1=a\ot 1$, with $a\in\k$ we have:
	$$\de(x)=a\ot x= ax\ot 1= xa\ot 1 \Leftrightarrow ax=xa \Leftrightarrow a\in Z(\k).$$
	This coproduct is coassociative:
	$$\begin{array}{rclclcl}
	(\de\ot 1)\de(x) & = & \de(1)\ot ax & = & a\ot 1\ot ax & = & a^2x\ot 1\ot 1\\
	(1\ot \de)\de(x) & = & ax \ot\de(1) & = & ax\ot a\ot 1 & = & axa\ot 1\ot 1
	\end{array}
	$$
	Since $a\in Z(\k)$ we have that $a^2x=axa$, and then $(\de\ot 1)\de(x)=(1\ot \de)\de(x)$, $\forall x\in\k$.\\
	Thus, the algebra $A:=\k$ is a nearly-Frobenius algebra and we have as many nearly-Frobenius structures on $\k$ as elements in the center of $\k$. \\
	Note that all  structures come from  Frobenius structures where the
	trace map is $\theta:\k\rt\k$ is given by $\theta(1)=1$.

\subsection{Matrix algebras}
	More generally let $A$ be the matrix algebra $M_{n\times n}(\k)$, with $\k$ a
	commutative field. We write the canonical basis of $A$ as:
	$\bigl\{E_{ij}:\;i,j=1,\dots ,n\bigr\}$, where
	$E_{ij}=\bigl(e_{kl}\bigr)_{kl}$ with
	$$e_{kl}=\left\{\begin{array}{lr}
	1 & \mbox{if}\; k=i, l=j \\
	0 & \mbox{in other case}
	\end{array}
	\right..$$ Notice that: $E_{ij}E_{kl}=\left\{\begin{array}{lr}
	E_{il} & \mbox{if}\; j=k \\
	0 & \mbox{in other case}
	\end{array}
	\right..$ In particular $E_{ii}E_{ij}=E_{ij}$ and
	$E_{ij}E_{jj}=E_{ij}$, then:
	$$\de\bigl(E_{ij}\bigr)=\de\bigl(E_{ij}\bigr)\bigl(1\ot E_{jj}\bigr)=\bigl(E_{ii}\ot 1\bigr)\de\bigl(E_{ij}\bigr)$$
	and $$\de\bigl(E_{ij}\bigr)=\de\bigl(E_{ii}\bigr)\bigl(1\ot
	E_{ij}\bigr)=\bigl(E_{ij}\ot 1\bigr)\de\bigl(E_{jj}\bigr).$$ The
	the last equations immeditely imply that: $$\de\bigl(E_{ij}\bigr)=\sum_{k,l=1}^n
	a_{kl}^{ij}E_{ik}\ot E_{lj}=\sum_{k,l=1}^n a_{kl}^{ii}E_{ik}\ot
	E_{lj}=\sum_{k,l=1}^n a_{kl}^{jj}E_{ik}\ot E_{lj},$$ and then,
	$a_{kl}^{ij}=a_{kl}^{ii}=a_{kl}^{jj}$, for all $k,l=1\dots ,n.$ This in turn implies: $$\de\bigl(E_{ij}\bigr)=\sum_{k,l=1}^n
	a_{kl}E_{ik}\ot E_{lj},\quad\forall\;i,j.$$
	
	Finally, we need to check that this coproduct is coassociative:
	$$\begin{array}{lclcl}
	(\de\ot 1)\de\bigl(E_{ij}\bigr) & = &\displaystyle{ \sum_{k,l=1}^na_{kl}\de\bigl(E_{ik}\bigr)\ot E_{lj}} & = & \displaystyle{\sum_{k,l=1}^n\sum_{r,s=1}^na_{kl}a_{rs}E_{ir}\ot E_{sk}\ot E_{lj}} \\
	(1\ot\de)\de\bigl(E_{ij}\bigr) & = & \displaystyle{\sum_{r,s=1}^na_{rs}E_{ir}\ot\de\bigl(E_{sj}\bigr)} & = & \displaystyle{\sum_{r,s=1}^n\sum_{k,l=1}^na_{rs}a_{kl}E_{ir}\ot E_{sk}\ot E_{lj} }
	\end{array}
	$$
	As $\k$ is commutative, we have that $(\de\ot
	1)\de\bigl(E_{ij}\bigr)=(1\ot\de)\de\bigl(E_{ij}\bigr)$.
	
	Note that $M_{n\times n}(\k)$ admits $n\times n$ independent
	coproducts, one for each $a_{kl}$, namely:
	$$\de\bigl(E_{ij}\bigr)=\sum_{k,l=1}^na_{kl}\de_{kl}\bigl(E_{ij}\bigr),\;\mbox{where}\;\de_{kl}\bigl(E_{ij}\bigr)=E_{ik}\ot E_{lj}.$$

\subsection{Cyclic algebras}

	Let $G$ be a cyclic finite group. The group  $\k[G]$ is a
	nearly-Frobenius algebra. A basis, as vector space, of $\k[G]$ is
	$\bigl\{g^i:\;i=1,\dots,n\bigr\}$ where $|G|=n$. As before, if we
	determine the value of the coproduct in the unit of the group, we
	have the value over all element of the algebra.
	
	A general expression of $\de(1)$ is:
	$$\displaystyle{\de(1)=\sum_{i,j=1}^n\al_{ij}g^i\ot g^j}.$$ Using
	that $\de\bigl(g^k\bigr)=\de(1)\bigl(1\ot g^k\bigr)=\bigl(g^k\ot
	1\bigr)\de(1)$, we have that:
	$$\sum_{i,j^1}^n\al_{ij}g^{k+i}\ot g^j=\sum_{i,j=1}^n\al_{ij}g^i\ot g^{j+k}.$$
	then, $\al_{i-kj}=\al_{ij-k}$, also $\al_{1j-1}=\al_{nj}$ and
	$\al_{in}=\al_{i-11}$. This permit us to express the coproduct as:
	$$\de(1)=\sum_{i=2}^n\al_i\left\{\sum_{k=1}^{i-1}g^k\ot g^{i-k}+\sum_{k=i}^ng^k\ot g^{n+i-k}\right\}.$$
	This implies that:
	$$
	\begin{array}{rcl}
	\de\bigl(g^k\bigr) & = & \displaystyle{\de(1)\bigl(1\ot g^k\bigr) }\\
	& = & \displaystyle{\sum_{i=2}^n\al_i\left\{\sum_{k=1}^{i-1}g^{k+l}\ot g^{i-k}+\sum_{k=i}^ng^{k+l}\ot g^{n+i-k}\right\}} \\
	\de\bigl(g^k\bigr) & = & \displaystyle{\bigl(g^k\ot 1\bigr)\de(1) }\\
	& = & \displaystyle{\sum_{i=2}^n\al_i\left\{\sum_{k=1}^{i-1}g^k\ot g^{i+l-k}+\sum_{k=i}^ng^k\ot g^{n+i+l-k}\right\}}.
	\end{array}
	$$
	These two expressions for $\de\bigl(g^k\bigr)$ coincide. The same type of argument proves
	coassociativity of the coproduct.
	
\subsection{General semisimple algebras}

To forma general semisimple algebra it is enough to consider the following situation:

\begin{rem}
	Assume that $A_1$ and $A_2$ are $\Bbbk$-algebras. The product of the algebras $A_1$ and $A_2$ is the algebra $A = A_1 \times A_2$ with the addition and the
	multiplication given by the formulas $(a_1, a_2) + (b_1, b_2) = (a_1 + b_1, a_2 + b_2)$ and $(a_1, a_2)(b_1, b_2) = (a_1b_1, a_2b_2)$, where $a_1$, $b_1\in A_1$ and $a_2$, $b_2\in A_2$. The
	identity of $A$ is the element $1 = (1_{A_1}, 1_{A_2}) = e_1+e_2 \in A_1\times A_2$, where $e_1 = (1_{A_1}, 0)$ and $e_2 = (0, 1_{A_2})$. If $\bigl(A_1,\de_1\bigr)$ and $\bigl(A_2,\de_2\bigr)$ are nearly Frobenius algebras then $A$ admits a natural structure of Nearly Frobenius algebra. In the next paragraph we describe this structure.

	First, we define $\de(e_1)=\sum(a_1,0)\otimes (a_2,0)$, where $\de_1(1_{\mathcal{A}_1})=\sum a_1\otimes a_2$ and $\de(e_2)=\sum (0,b_1)\otimes (0,b_2)$, where $\de_2(1_{\mathcal{A}_2})=\sum b_1\otimes b_2$. Then
	$$\de(1)=\sum(a_1,0)\otimes (a_2,0)+\sum (0,b_1)\otimes (0,b_2)\in  	A\otimes A.$$
	To prove that this defines a bimodule morphism it is necessary to guarantee  that $\de(1)$ satisfies that
	$$(c\otimes 1)\de(1)=\de(1)(1\otimes c),\;\forall\; c\in A.$$  
	Denote $c=(c_1,c_2)\in A$, then
	$$\begin{array}{ccl}
	(c\otimes 1)\de(1)&=&\displaystyle{(c_1,c_2)\otimes(1,1)\left[\sum(a_1,0)\otimes (a_2,0)+\sum (0,b_1)\otimes (0,b_2)\right]}\\
	&=&\displaystyle{\sum\left((c_1,c_2)\otimes(1,1)\right)\left((a_1,0)\otimes (a_2,0)\right)+\sum \left((c_1,c_2)\otimes(1,1)\right)\left((0,b_1)\otimes (0,b_2)\right)}\\
	&=& \displaystyle{\sum (c_1a_1,0)\otimes (a_2,0)+\sum (0,c_2b_1)\otimes(0,b_2). }
	\end{array}$$
	
	On the other hand
	
	$$\begin{array}{ccl}
	\de(1)(1\otimes c)&=&\displaystyle{\left[\sum(a_1,0)\otimes (a_2,0)+\sum (0,b_1)\otimes (0,b_2)\right]\left((1,1)\otimes(c_1,c_2)\right)}\\
	&=&\displaystyle{\sum\left((a_1,0)\otimes (a_2,0)\right)\left((1,1)\otimes(c_1,c_2)\right)+\sum \left((0,b_1)\otimes (0,b_2)\right)\left((1,1)\otimes(c_1,c_2)\right)}\\
	&=& \displaystyle{\sum (a_1,0)\otimes (a_2c_1,0)+\sum (0,b_1)\otimes(0,b_2c_2). }
	\end{array}$$
	
	Remember that $\de_{A_1}$ and $\de_{A_2}$ are bimodule morphisms, then 
	$$(c_1\otimes 1)\de_{A_1}(1_{A_1})=\sum c_1a_1\otimes a_2=\sum a_1\otimes a_2c_1=\de_{A_1}(1_{A_1})(1\otimes c_1)$$
	and
	$$(c_2\otimes 1)\de_{A_2}(1_{A_2})=\sum c_2b_1\otimes b_2=\sum b_1\otimes b_2c_2=\de_{A_2}(1_{A_2})(1\otimes c_2)$$
	
	This proves that $(c\otimes 1)\de(1)=\de(1)(1\otimes c)$. Then $A$ is a nearly Frobenius algebra.
	
\end{rem}

\begin{coro}
	If $char(\k)$ does not divide the order of $G$, then $\k[G]$ is a
	nearly-Frobenius algebra.
\end{coro}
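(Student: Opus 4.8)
The plan is to combine Maschke's theorem with the Artin--Wedderburn structure theorem and the constructions established earlier in this section.

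Since the characteristic of $\k$ does not divide $|G|$, Maschke's theorem tells us that $\k[G]$ is a semisimple $\k$-algebra, and it is of course finite dimensional over $\k$. By the Artin--Wedderburn theorem there is therefore an isomorphism of $\k$-algebras
$$\k[G] \cong M_{n_1}(D_1) \times \cdots \times M_{n_r}(D_r),$$
where each $D_i$ is a division ring (a ``non-commutative field'' in the terminology of this section), finite dimensional over $\k$, and with $\k$ embedded in the center $Z(D_i)$. So it suffices to show that each factor $M_{n_i}(D_i)$ is nearly Frobenius and then assemble the pieces.

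For a single factor I would argue as follows. Because $\k \subseteq Z(D_i)$, entrywise scalar multiplication gives an isomorphism of $\k$-algebras $M_{n_i}(D_i) \cong M_{n_i}(\k) \otimes_\k D_i$: the map $A\otimes d \mapsto A\cdot d$ is multiplicative precisely because the entries of the matrices over $\k$ are central in $D_i$, and it is bijective by a dimension count. Now $M_{n_i}(\k)$, a matrix algebra over the commutative field $\k$, was equipped with a nearly-Frobenius coproduct in the subsection on matrix algebras, and $D_i$ was equipped with one in the subsection on non-commutative fields. Hence, by Theorem \ref{t1}, the tensor product $M_{n_i}(\k)\otimes_\k D_i$ carries a nearly-Frobenius structure, and transporting it along the algebra isomorphism above makes $M_{n_i}(D_i)$ a nearly-Frobenius algebra. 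Finally, the Remark on products of algebras shows that a product of two nearly-Frobenius algebras is again nearly Frobenius; iterating this $r-1$ times shows that $M_{n_1}(D_1) \times \cdots \times M_{n_r}(D_r)$, and therefore $\k[G]$, is a nearly-Frobenius algebra.

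The main obstacle is the passage from matrix algebras over $\k$ to matrix algebras over a general division ring $D_i$: the coassociativity check in the subsection on matrix algebras explicitly used commutativity of the ground field, so it does not apply verbatim to $M_n(D)$ with $D$ noncommutative. This is exactly what forces the argument to go through the identification $M_n(D_i)\cong M_n(\k)\otimes_\k D_i$ and Theorem \ref{t1}, rather than by repeating the matrix computation with coefficients in $D_i$. Everything else --- Maschke's theorem, Artin--Wedderburn, and the iteration of the product Remark --- is standard.
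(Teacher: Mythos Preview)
Your argument is correct and follows the same skeleton as the paper's proof: invoke Maschke's theorem to get semisimplicity, apply the Wedderburn decomposition, note that each matrix factor is nearly Frobenius by the earlier subsections, and then assemble the factors using the product Remark.

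The one genuine difference is that the paper's proof simply writes the Wedderburn decomposition as a product of $M_{n_i\times n_i}(\k)$ over the ground field, tacitly assuming $\k$ is a splitting field for $G$ (e.g.\ algebraically closed), and then appeals directly to the matrix-algebra subsection. You instead keep the general form $M_{n_i}(D_i)$ with division rings $D_i$, and you correctly observe that the coassociativity computation for matrix algebras used commutativity of the ground field, so it does not apply verbatim to $M_n(D_i)$; your detour through the identification $M_{n_i}(D_i)\cong M_{n_i}(\k)\otimes_\k D_i$ together with Theorem~\ref{t1} and the non-commutative-field subsection repairs this. In that sense your version is a strict improvement: it actually covers arbitrary $\k$, whereas the paper's one-line proof only works as stated once one assumes $\k$ splits $\k[G]$.
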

\begin{proof}
	Applying Maschke's theorem, we have that $\k[G]$ is semisimple,
	then it is a product of simple algebras $M_{n_i\times n_i}(\k)$.
	Therefore, by the Theorem \ref{theorem1}, we conclude that $\k[G]$
	is a nearly-Frobenius algebra. Moreover, we can determine all the
	nearly-Frobenius structures that it admits.
\end{proof}

From what we have seen, we conclude that, in the case of semi-simple algebras, the Frobenius space of $A$ is a vector space of dimension equal to the dimension of $A$, and that it has a one dimensional subspace (minus the origin) of \emph{bona fide} Frobenius structures.

\section{Nearly Frobenius Quiver Algebras} 

Quiver algebras provide a large collection of  examples of nearly-Frobenius algebras as have been shown in  \cite{AGL12} by Artenstein, Lanzilotta and the first author of this paper.
Let us summarize briefly these
results (see \cite{ASS06}).

First, recall that a \emph{quiver} $Q = \bigl(Q_0,Q_1, s, t\bigr)$ consists of two sets: $Q_0$ (whose elements are called
\emph{vertices}) and $Q_1$ (whose elements are
called \emph{arrows}), and two maps $s, t : Q_1 \rt Q_0$, which
associate to each arrow $\al\in Q_1$ its \emph{source} $s(\al)\in
Q_0$ and its \emph{target} $t(\al)\in Q_0$, respectively.

\begin{defn}
	Let $Q = \bigl(Q_0,Q_1, s, t\bigr)$ be a quiver and $a, b\in Q_0$. A
	\emph{path} of \emph{length} $l\geq 1$ with source $a$ and target
	$b$ (or, more briefly, from $a$ to $b$) is a sequence
	$$\bigl(a |\al_1,\al_2,\dots,\al_l| b\bigr),$$
	where $\al_k\in Q_1$ for all $1\leq k\leq l$, and we have
	$s\bigl(\al_1\bigr)=a$, $t\bigl(\al_k\bigr)=s\bigl(\al_{k+1}\bigr)$
	for each $1\leq k<l$, and finally $t\bigl(\al_l\bigr)=b$. Such a
	path is denoted briefly by $\al_1\al_2\dots\al_l$.
\end{defn}

\begin{defn}
	Let $Q$ be a quiver. The \emph{path algebra} $\k Q$ is the
	$\k$-algebra whose underlying $\k$-vector space has as its basis the
	set of all paths $\bigl(a|\al_1,\al_2,\dots,\al_l|b\bigr)$ of length
	$l\geq 0$ in $Q$ and such that the product of two basis vectors
	$\bigl(a|\al_1,\al_2,\dots,\al_l|b\bigr)$ and $\bigl(c
	|\be_1,\be_2,\dots,\be_k|d\bigr)$ of $\k Q$ is defined by:
	$$\bigl(a|\al_1,\al_2,\dots,\al_l|b\bigr)\bigl(c |\be_1,\be_2,\dots,\be_k|d\bigr)=\delta_{bc}\bigl(a|\al_1,\dots,\al_l,\be_1,\dots,\be_k|d),$$
	where $\delta_{bc}$ denotes the Kronecker delta. In other words, the
	product of two paths $\al_1\dots\al_l$ and $\be_1\dots\be_k$ is
	equal to zero if $t\bigl(\al_l\bigr) \neq s\bigl(\be_1\bigr)$ and is
	equal to the composed path $\al_1\dots\al_l\be_1\dots\be_k$ if
	$t\bigl(\al_l\bigr) = s\bigl(\be_1\bigr)$. The product of basis
	elements is then extended to arbitrary elements of $\k Q$ by
	distributivity.
\end{defn}
\begin{exmp}
	If $Q$ is the following quiver:
	$$\includegraphics{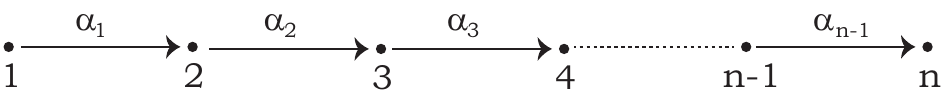}$$
	Then the path algebra $A=\k Q$  $$\k Q=\langle e_1, e_2,\dots
	,e_n,\al_i\dots \al_{i+j}:i=1,\dots,n, j\geq 0 \rangle.$$ admits a
	unique nearly-Frobenius structure, where the coproduct is defines as
	follows:
	$$\begin{array}{rcl}
	\de(e_1)&=&a\al_1\dots\al_{n-1}\ot e_1, \\
	\de(e_n)&=&a e_n\ot\al_1\dots\al_{n-1}, \\
	\de(e_i)&=&a\al_i\dots\al_{n-1}\ot\al_1\dots\al_{i-1}, \\
	\de(\al_i\dots\al_j)&=&a\al_i\dots\al_{n-1}\ot\al_1\dots\al_j,
	\end{array}
	$$ where $a\in\k.$
\end{exmp}

\begin{theorem}
	Let $A=\k Q$ with $Q$ a finite, connected quiver with no oriented
	cycles. Then $A$ has a nearly-Frobenius structure if and only if
	$Q=A_{n}$ with all the arrows in $Q$ having the same orientation.
\end{theorem}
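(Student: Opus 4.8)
The plan is to prove both implications separately, exploiting the interplay between the combinatorial structure of $Q$ and the Frobenius identities $a\Delta(b) = \Delta(ab) = \Delta(a)b$. Throughout, let $\{e_v : v\in Q_0\}$ denote the primitive orthogonal idempotents of $A=\k Q$ corresponding to the vertices; since $Q$ is finite, connected, and has no oriented cycles, $A$ is finite dimensional and $1 = \sum_{v} e_v$.

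For the ``if'' direction ($Q = A_n$ with all arrows oriented the same way implies a nearly-Frobenius structure exists), I would simply exhibit the coproduct, following the linear quiver example given just above the theorem: label the vertices $1,\dots,n$ with arrows $\al_i\colon i\to i+1$, set $\Delta(e_1) = a\,\al_1\cdots\al_{n-1}\otimes e_1$, $\Delta(e_n) = a\,e_n\otimes\al_1\cdots\al_{n-1}$, $\Delta(e_i) = a\,\al_i\cdots\al_{n-1}\otimes\al_1\cdots\al_{i-1}$ for $1<i<n$, and extend to paths by $\Delta(\al_i\cdots\al_j) = a\,\al_i\cdots\al_{n-1}\otimes\al_1\cdots\al_j$. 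Then I would check coassociativity and the bimodule property by direct computation on the basis of paths (this is the routine part, and it mirrors the truncated-polynomial computation in the earlier example since $\k A_n \cong$ a subalgebra-like structure built on a single longest path). The key point making this work is that the unique longest path $\al_1\cdots\al_{n-1}$ from $1$ to $n$ behaves like a ``socle'' element through which $\Delta$ can factor.

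For the ``only if'' direction ($A$ has a nearly-Frobenius structure implies $Q=A_n$ linearly oriented), I would argue by contradiction, supposing $Q$ is not of this form and deriving that any bimodule coassociative $\Delta$ must vanish identically — but $\Delta \equiv 0$ is excluded (or, more carefully, the definition of nearly-Frobenius allows $\Delta=0$, so I need to be precise: the theorem presumably means a \emph{nonzero} structure, or one that is nondegenerate in some sense; I would clarify this in the statement). The main tool is the Frobenius identity applied to idempotents: for any vertex $v$, $\Delta(e_v) = \Delta(e_v e_v) = (e_v\otimes 1)\Delta(e_v) = \Delta(e_v)(1\otimes e_v)$, which forces $\Delta(e_v) \in e_v A\otimes A e_v$. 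Writing $\Delta(e_v) = \sum_{w,w'} c^v_{w,w'}\, p_{v\to w}\otimes q_{w'\to v}$ as a sum over paths starting at $v$ and paths ending at $v$, I would then use $\Delta(e_v) = \Delta(e_v e_u) = 0$ for $u\neq v$, together with $(e_u\otimes 1)\Delta(e_v)$ and $\Delta(e_v)(1\otimes e_u)$, to pin down strong constraints. The crucial structural input: if $Q$ has a vertex of in-degree or out-degree $\geq 2$, or is a path but with a non-monotone orientation (a ``source'' or ``sink'' in the middle), then the available paths through $A$ fail to support a consistent coassociative bimodule $\Delta$ — essentially because there is no longest path through which everything can factor, or because the two-sided ideal structure is incompatible.

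The hard part will be the ``only if'' direction, specifically ruling out branched quivers and wrongly-oriented $A_n$-type quivers in a uniform way. I expect the cleanest route is: (1) show $\Delta(e_v) \in e_v A\otimes A e_v$ for every $v$; (2) use coassociativity $(\Delta\otimes 1)\Delta(e_v) = (1\otimes\Delta)\Delta(e_v)$ to force the ``middle'' tensor factors to line up, which constrains which vertices can appear; (3) observe that summing $\Delta(e_v)$ over $v$ gives $\Delta(1)$, and then $\Delta(\text{path } p) = p\cdot\Delta(1) = \Delta(1)\cdot p$ must be consistent for every arrow $p$, which is exactly the same computation as in the linear case and breaks unless each arrow has a unique way to ``complete'' it on the left and on the right — i.e., unless $Q$ is linearly oriented $A_n$. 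Making step (2)–(3) into a clean contradiction for arbitrary tree-shaped (acyclic connected) quivers, rather than a case analysis, is where the real work lies; I would likely phrase it in terms of: a nonzero bimodule $\Delta$ forces the existence of an element $z\in A$ with $e_v z e_v \neq 0$ for the relevant $v$ and $pz = zp'$ relating source and sink, which only happens when $Q = A_n$ with consistent orientation.
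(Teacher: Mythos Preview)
The paper does not prove this theorem. It is stated without proof as a summary of results from the cited reference \cite{AGL12} (Artenstein, Gonz\'alez, Lanzilotta); the surrounding section explicitly says it is ``summariz[ing] briefly these results.'' So there is no argument in the paper against which to compare your proposal.

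That said, a few comments on your outline. The ``if'' direction is fine: you correctly invoke the explicit coproduct given in the example immediately preceding the theorem, and the verification is routine. For the ``only if'' direction, your opening move is the right one and matches what the original reference does: from the Frobenius identities one gets $\Delta(e_v)\in e_vA\otimes Ae_v$, and then, since $\Delta$ is determined by $\Delta(1)=\sum_v\Delta(e_v)\in I(A\otimes A)$, each arrow $\alpha\colon u\to w$ imposes the compatibility $(\alpha\otimes 1)\Delta(e_w)=\Delta(e_u)(1\otimes\alpha)$. From there the combinatorics of sources, sinks, and branch vertices in an acyclic connected quiver force these constraints to be inconsistent unless $Q$ is linearly oriented $A_n$. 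You are honest that this step is not carried out; it is indeed where the work lies, and in \cite{AGL12} it is done by a careful case analysis on the local structure of $Q$ rather than by a single slick argument.

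Your observation about $\Delta=0$ is correct and worth flagging: under the paper's definition the zero coproduct is always a nearly-Frobenius structure, so the theorem must be read as asserting the existence of a \emph{nontrivial} structure (equivalently, $\Frobdim(\k Q)>0$). This is the intended reading in \cite{AGL12} as well.
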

If we introduce relations in the quiver $Q$, then the nearly-Frobenius structures over $Q$ are very interesting.
\begin{prop}
	The path algebra associated to the quiver
	$$\includegraphics{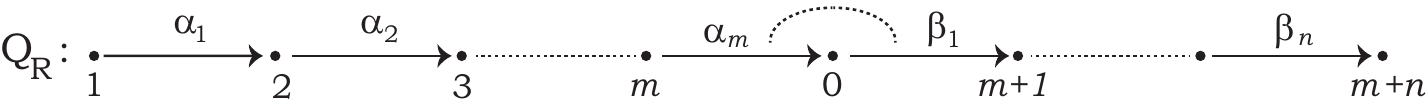}$$
	with the relation $\al_m\be_1=0$, admits $mn+2$ independent
	nearly-Frobenius structures; these are:
	$$\begin{array}{rclcrcl}
	\de\bigl(e_1\bigr) & =      & a\al_1\dots\al_m\ot e_1                &  & \de\bigl(e_{m+1}\bigr) & =      & b\be_2\dots\be_n\ot\be_1 \\
	& \vdots &                                        &  &                        & \vdots &  \\
	\de\bigl(e_i\bigr) & =      & a\al_i\dots\al_m\ot\al_1\dots\al_{i-1} &  & \de\bigl(e_{m+i}\bigr) & =      & b\be_{i+1}\dots\be_n\ot\be_1\dots\be_i \\
	& \vdots &                                        &  &                        & \vdots &  \\
	\de\bigl(e_m\bigr) & =      & a\al_m\ot\al_1\dots\al_{m-1}           &  & \de\bigl(e_{m+n}\bigr) & =      & be_{m+n}\ot\be_1\dots\be_n \\
	\de\bigl(\al_i\dots\al_j\bigr) & = & a\al_i\dots\al_m\ot\al_1\dots\al_j  &  & \de\bigl(\be_i\dots\be_j\bigr) & = & b\be_i\dots\be_n\ot\be_1\dots\be_j
	\end{array}$$
	$$\de\bigl(e_0\bigr)=ae_0\ot\al_1\dots\al_m+b\be_1\dots\be_n\ot e_0+\sum_{i=1}^m\sum_{j=1}^nc_{ij}\be_1\dots\be_j\ot\al_i\dots\al_m$$
	where $a,b, c_{ij}\in\k$.
\end{prop}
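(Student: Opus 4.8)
The plan is to reduce the problem, exactly as in the earlier quiver examples, to determining $\de$ on the primitive idempotents $e_0,e_1,\dots,e_{m+n}$, since the Frobenius identities $a\de(b)=\de(ab)=\de(a)b$ force the value of $\de$ on every path once its values on the idempotents are known. First I would set up the combinatorics of the path algebra $A=\k Q/(\al_m\be_1)$: the nonzero paths are the subpaths $\al_i\cdots\al_j$ along the first branch, the subpaths $\be_i\cdots\be_j$ along the second branch, and (crucially) \emph{no} path that mixes $\al$'s and $\be$'s, because every such mixed path would have to contain the forbidden composite $\al_m\be_1$. This means $A$ decomposes, as a vector space, into the span of the $\al$-paths, the span of the $\be$-paths, and the idempotents $e_0,\dots,e_{m+n}$; the vertex $e_0$ is the unique one lying on both branches (it is $s(\al_1)=s(\be_1)$ in the picture), which is why $\de(e_0)$ carries the extra freedom.

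Next I would write the most general bimodule-compatible ansatz. For a vertex $e_k$ with $k\neq 0$ lying only on the $\al$-branch (resp. $\be$-branch), the relations $e_k=e_k\cdot e_k$ together with $\de(e_k)=\de(e_k)(1\ot e_k)=(e_k\ot 1)\de(e_k)$ force $\de(e_k)$ to be supported on pairs $(p,q)$ of $\al$-paths (resp. $\be$-paths) with $e_k=t(p)=s(q)$ — i.e. paths that "end at $e_k$ on the left and start at $e_k$ on the right" — and a dimension count of such pairs, combined with the single global scaling coming from $\de$ being determined by $\de$ of the source idempotent of the branch, yields exactly one free parameter $a$ for the first branch and one free parameter $b$ for the second. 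For $e_0$, which sits at the source of both branches, the same idempotent/bimodule analysis allows three types of terms: $e_0\ot(\text{full }\al\text{-path})$, $(\text{full }\be\text{-path})\ot e_0$, and the mixed terms $\be_1\cdots\be_j\ot\al_i\cdots\al_m$ — the latter being consistent precisely because $\al_i\cdots\al_m$ starts where $\be_1\cdots\be_j$ ends only formally, and the bimodule conditions $x\de(e_0)=\de(e_0)x$ do not kill them since multiplying such a term by any arrow either annihilates it or stays within the allowed paths. This gives the $c_{ij}$ with $1\le i\le m$, $1\le j\le n$, for a running total of $1+1+mn=mn+2$ parameters; I would verify the stated explicit formulas for $\de(e_i)$, $\de(\al_i\cdots\al_j)$, $\de(\be_i\cdots\be_j)$ are exactly what the Frobenius identities propagate from these choices.

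Finally I would check coassociativity, $(\de\ot 1)\de=(1\ot\de)\de$. For the pure-branch part this is the same computation as in the $A_n$-quiver example already treated, so it goes through verbatim on each branch. The genuinely new check is on $\de(e_0)$ and on the mixed $c_{ij}$-terms: one must confirm that applying $\de$ again to either tensor factor of a mixed term $\be_1\cdots\be_j\ot\al_i\cdots\al_m$ produces matching three-fold sums, and — this is the step I expect to be the main obstacle — that no \emph{new} relations among the $c_{ij}$ are imposed by coassociativity, so that all $mn$ of them really are independent. The potential subtlety is that $\de$ applied to $\al_i\cdots\al_m$ reintroduces $e_1$-type or full-$\al$-path factors via the branch formula with parameter $a$, and similarly on the $\be$ side with $b$, so coassociativity could in principle force relations like $c_{ij}$ being proportional to $ab$; I would resolve this by carefully tracking the relation $\al_m\be_1=0$, which causes the dangerous cross-terms to vanish, leaving the $c_{ij}$ genuinely unconstrained and establishing the count $mn+2$.
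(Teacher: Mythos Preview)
The paper itself does not prove this proposition; it is quoted from \cite{AGL12}, so there is no in-paper argument to compare against line by line. That said, your strategy---reduce to the values $\de(e_k)$ on the primitive idempotents, use the Frobenius identities to propagate along arrows, and count the surviving parameters---is exactly the method used in \cite{AGL12} and is the right one.

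Two corrections are worth making. First, a small geometric slip: from the displayed formulas (e.g.\ $\de(e_0)$ contains $e_0\ot\al_1\cdots\al_m$ and $\be_1\cdots\be_j\ot\al_i\cdots\al_m$, which must satisfy $(e_0\ot 1)\cdot=\cdot=(\cdot)(1\ot e_0)$) one sees that $e_0=t(\al_m)=s(\be_1)$, not $s(\al_1)=s(\be_1)$. The quiver is a single oriented line $1\to\cdots\to m\to 0\to m{+}1\to\cdots\to m{+}n$ broken at the middle vertex by the relation $\al_m\be_1=0$; your description of the basis of $A$ (no mixed $\al\be$-paths) is nevertheless correct.

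Second, and more importantly, the ``main obstacle'' you anticipate in the last paragraph does not exist. The paper's own Lemma~\ref{lema:ea} shows that for any unital algebra $A$ the map $\de\mapsto\de(1)$ is a bijection from nearly-Frobenius structures onto $I(A\ot A)=\{\xi\in A\ot A: a\xi=\xi a\ \forall a\}$, and the proof there verifies that coassociativity is \emph{automatic} for any $\de$ built from such a $\xi$. Hence once you have checked that your $(mn+2)$-parameter family satisfies the bimodule condition (equivalently, that the corresponding elements lie in $I(A\ot A)$), you are done: no separate coassociativity check is needed, and in particular no hidden relations among the $c_{ij}$ can arise from it. Your argument therefore simplifies to computing $I(A\ot A)$ directly---write a general $\xi=\sum\lambda_{p,q}\,p\ot q$, impose $e_k\xi=\xi e_k$ (forcing $s(p)=t(q)$) and $\al_k\xi=\xi\al_k$, $\be_k\xi=\xi\be_k$ for each arrow, and read off the $mn+2$ surviving coefficients.
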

\begin{theorem}
	The path algebra $A$ associated to the cyclic quiver $Q$
	$$\includegraphics{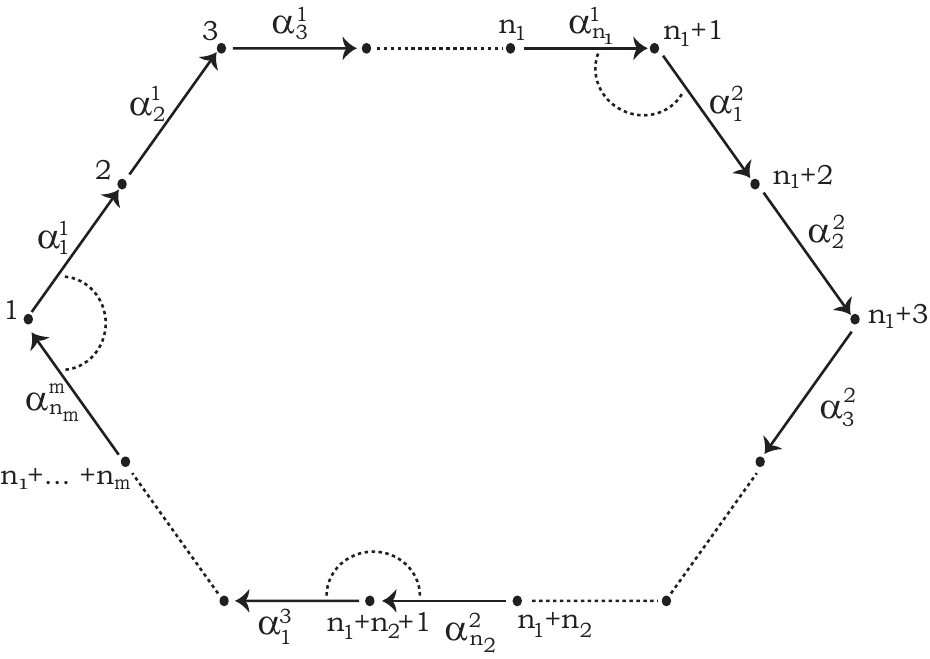}$$
	with $m$ maximal paths of length $n_i$, $i=1,\dots, m$ admits $R$
	nearly-Frobenius structures, where
	$$R=m+\sum_{i=1}^mn_in_{i+1}$$
	with $n_{m+1}=m_1.$
\end{theorem}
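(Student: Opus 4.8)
The plan is to argue exactly as in the examples of the $A_n$ quiver and of the quiver with one relation treated above (see also \cite{AGL12}), reducing everything to a finite piece of linear algebra. First I would use that the coproduct $\de$ of any nearly-Frobenius structure is an $A$-bimodule map, so the Frobenius identities $a\,\de(b)=\de(ab)=\de(a)\,b$ force $\de$ to be determined by its values on the trivial paths $e_v$, $v\in Q_0$: for a path $p$ from $v$ to $w$ one has $\de(p)=\de(e_v)\,p=p\,\de(e_w)$, and from $\de(e_v)=e_v\,\de(e_v)=\de(e_v)\,e_v$ one gets $\de(e_v)\in e_vA\ot Ae_v$, that is, $\de(e_v)$ is a $\k$-combination of terms (path out of $v$) $\ot$ (path into $v$). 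Since the defining relations confine every path to lie inside a single one of the $m$ maximal paths, $A$ is finite dimensional and each space $e_vA\ot Ae_v$ is finite dimensional; in particular, at the marked vertex $v$ where the $i$-th maximal path ends and the $(i+1)$-st begins, the paths out of $v$ are the prefixes of the $(i+1)$-st path and the paths into $v$ are the suffixes of the $i$-th, so a priori $\de(e_v)$ involves $(n_{i+1}+1)(n_i+1)$ scalar unknowns, and fewer at an interior vertex.

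Next I would impose the Frobenius identities in the form $\de(e_v)\,\al=\al\,\de(e_w)$ for every arrow $\al\colon v\rt w$ (the same identity for arbitrary paths then follows by induction on the length). Propagating this relation along each maximal path one finds, just as in the $A_n$ case, that along the $i$-th path all of the surviving ``prefix $\ot$ trivial path'' and ``trivial path $\ot$ suffix'' data collapse to a single scalar $a_i\in\k$ --- the term $e_v\ot e_v$ being killed --- and that at the junction of the $i$-th and $(i+1)$-st maximal paths the only cross terms that survive are the (nonempty prefix of path $i+1$) $\ot$ (nonempty suffix of path $i$), each carrying its own free scalar, which yields $n_{i+1}n_i$ parameters at that junction; the relations force every cross term between two non-consecutive maximal paths to vanish. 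Hence after this step the candidate coproduct depends on precisely $m+\sum_{i=1}^m n_in_{i+1}$ scalars, with the convention $n_{m+1}=n_1$.

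Finally I would verify coassociativity, $(\de\ot 1)\de=(1\ot\de)\de$, evaluated on each $e_v$. As in the explicit checks already performed for $\k[x]/x^{n+1}$ and for the cyclic group algebra, for a coproduct of the shape obtained in the previous step the two iterated coproducts evaluate to the same sum, so coassociativity imposes no further relation among the $a_i$ and the cross-term scalars, and the whole $R$-parameter family does consist of nearly-Frobenius structures. Thus $\ma{N}(A)$ is an affine space of dimension $R=m+\sum_{i=1}^m n_in_{i+1}$, which is the assertion. The hard part is the middle step: one must bookkeep carefully which cross terms between maximal paths remain once all the quiver relations are taken into account, and confirm that the arrow-compatibility equations glue the coefficients running around the $i$-th maximal path so that exactly one free parameter $a_i$ is left for each of the $m$ maximal paths and exactly $n_in_{i+1}$ at each junction. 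The coassociativity verification in the last step is longer but completely mechanical, of the same flavour as the ones already given.
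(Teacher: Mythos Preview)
The paper does not actually prove this theorem: it is quoted from \cite{AGL12} as one of the results summarised in the section on quiver algebras, with no argument given beyond the preceding worked examples (the $A_n$ quiver and the two-segment quiver with relation $\al_m\be_1=0$). So there is no proof in the paper to compare against directly.

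Your strategy is the natural one and is precisely the method illustrated by those preceding examples: reduce to the values $\de(e_v)$ via the bimodule property, use $e_v\de(e_v)e_v=\de(e_v)$ to localise the ansatz in $e_vA\ot Ae_v$, propagate the arrow constraints $\de(e_v)\al=\al\de(e_w)$ along each maximal path to collapse the ``internal'' parameters to a single scalar per maximal path, and then observe that at each junction the surviving cross terms are exactly the (nonempty prefix of the outgoing maximal path)$\,\ot\,$(nonempty suffix of the incoming maximal path). This reproduces the count $R=m+\sum_i n_in_{i+1}$ and is consistent with the $mn+2$ answer in the preceding proposition (one junction contributing $mn$, two maximal paths contributing $2$). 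Your remark that coassociativity imposes no further relations is also in line with Lemma~\ref{lema:ea} later in the paper, which shows that any bimodule map $\de$ with $\de(1)\in I(A\ot A)$ is automatically coassociative; in the quiver setting $\de(1)=\sum_v\de(e_v)$, so once the bimodule constraints are satisfied coassociativity is indeed free.

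The only point where your sketch is a little loose is the bookkeeping at the junction vertices: you assert that the $e_v\ot e_v$ term is killed and that exactly the $n_in_{i+1}$ ``nonempty$\,\ot\,$nonempty'' cross terms survive, but you do not say which arrow constraint kills the mixed terms $e_v\ot(\text{suffix})$ and $(\text{prefix})\ot e_v$ versus which ones are absorbed into the scalars $a_i$, $a_{i+1}$. In the cyclic case every vertex has both an incoming and an outgoing arrow, so the argument that forces these boundary terms to match the $a_i$'s (or vanish) goes through uniformly, unlike the linear $A_n$ case; making this explicit is exactly the ``careful bookkeeping'' you flag, and it is the substance of the proof in \cite{AGL12}.
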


\section{Almost TQFTs}

Just as Frobenius algebras correspond to TQFTs, nearly Frobenius algebras correspond to \emph{almost TQFTs}.

\begin{defn}
	An \emph{almost TQFT} is a functor $Z$ from the full subcategory $\mathbf{Cob}_2^+$ of $\mathbf{Cob}_2$, whose objects are positive integers $n>0$ (standing, as before, for the disjoint union of $n$ circles), into the category $\mathbf{Vect}^\infty$ of (possibly infinite dimensional) vector spaces.
\end{defn} 

It could be the case that an almost TQFT is also a TQFT, but this happens rarely. Almost TQFTs often have no traces, but they always have coproducts.

\begin{theorem}
	The category of nearly Frobenius algebras is equivalent to the category of almost TQFTs.
\end{theorem}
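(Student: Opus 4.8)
The plan is to mimic the classical proof that $\TQFT \cong \Frob$ given earlier in the paper, but with the unit and counit (cap and cup) removed from the list of generators. First I would set up the correspondence on objects: given an almost TQFT $Z$, put $A := Z(1)$, and given a nearly Frobenius algebra $(A,\mu,\Delta)$, I want to build a functor $Z$ on $\mathbf{Cob}_2^+$. Since objects of $\mathbf{Cob}_2^+$ are positive integers $n>0$, the monoidal condition $Z(n+m)\cong Z(n)\otimes Z(m)$ forces $Z(n)=A^{\otimes n}$. The morphisms of $\mathbf{Cob}_2^+$ are surfaces $(n,g,m)$ with $n>0$ and $m>0$ (no closed-off ends, since $n=0$ and $m=0$ are excluded), so every such surface admits a decomposition into \emph{only} the elementary pieces $\mu = Z(2,0,1)$, $\Delta = Z(1,0,2)$ and the identity $\id_A = Z(1,0,1)$ — crucially one never needs the cap $u=Z(0,0,1)$ or the cup $\theta = Z(1,0,0)$, precisely because the boundary is nonempty on both sides. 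I would spell out this decomposition statement as the combinatorial-topology input, exactly as in the Frobenius case but restricted to surfaces with at least one input and one output boundary circle.

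Next I would define the linear map assigned to a surface $(n,g,m)$ by picking any Morse function $f$ as in $\mathcal{M}$ (now with the constraint that both $n>0$ and $m>0$, so the pair-of-pants decomposition uses only $\mu$, $\Delta$, $\id$), and composing the corresponding elementary maps. To see this is well defined — independent of the decomposition — I invoke the same two ingredients the paper already used: Cerf--Morse theory, which says any two pants decompositions are connected by a finite sequence of moves in which two consecutive critical points exchange places across a wall; and the algebraic move that governs such an exchange. In the Frobenius setting that move was Lemma \ref{AlgLemma}, whose content is exactly the Frobenius identities $a\Delta(b)=\Delta(ab)=\Delta(a)b$ together with co-associativity and co-commutativity. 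In the nearly Frobenius setting these are precisely the axioms in the definition of a nearly Frobenius algebra (with the super-commutativity coming from the braiding $\tau$), so the same wall-crossing argument goes through verbatim; the only walls that could have involved a cap or cup critical point are absent because we never have such critical points when both boundaries are nonempty. Hence $Z$ is a well-defined functor, i.e. an almost TQFT.

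Finally I would check that the two constructions are mutually inverse and functorial: starting from $Z$, reading off $\mu = Z(2,0,1)$, $\Delta = Z(1,0,2)$, verifying associativity, co-associativity and the Frobenius/bimodule identity via the glueing axiom applied to the relevant surfaces (the same figures \ref{FrobIden1}, \ref{FrobIden2} used before, none of which involve caps or cups), recovers a nearly Frobenius algebra; and the decomposition argument shows that rebuilding $Z$ from this data returns the original functor. A morphism of almost TQFTs is a monoidal natural transformation, which on $Z(1)=A$ is a single linear map; chasing the elementary cobordisms shows it is a morphism of nearly Frobenius algebras (respecting $\mu$ and $\Delta$), and conversely. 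This gives the equivalence of categories.

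The main obstacle I anticipate is being careful that the decomposition into elementary cobordisms genuinely avoids caps and cups for \emph{every} surface with $n>0$ and $m>0$, including disconnected surfaces and surfaces with closed components — one must either forbid closed components or handle a closed genus-$g$ component, which in the absence of $\theta$ and $u$ has no natural value; the cleanest fix is to note that $\mathbf{Cob}_2^+$'s morphisms, being built from $\mu,\Delta,\id$ alone, simply do not include closed components, and to state this restriction explicitly. Everything else is a routine transcription of the Frobenius-case argument with the counit-related pieces deleted.
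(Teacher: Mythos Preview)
Your proposal is correct and follows essentially the same approach as the paper's own proof, which is extremely terse: the paper simply remarks that the argument for $F:\TQFT\to\Frob$ carries over, reads off the nearly Frobenius structure on $A:=Z(S^1)$, and in the converse direction picks an arbitrary $f\in\mathcal{M}(\Sigma)$ to get a pair-of-pants decomposition, with independence from Cerf's theory. You have fleshed this out with considerably more care---in particular your observation that caps and cups never appear because $n,m>0$, and your caveat about closed components, are points the paper leaves implicit.
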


\begin{proof}
	Notice that the way we set up the proof that $F$ in equation \ref{Folk} works as well to prove this theorem. Again, given an almost TQFT it is easy to define on $A:=Z(S^1)$ a nearly Frobenius algebra structure. Conversely, the construction of the almost TQFT from a nearly Frobenius algebra works by using an arbitrary $f$ in $\mathcal{M}(\Sigma)$ to produce a pair-of-pants decomposition, and the independence of the decomposition follows from Cerf's theory.
\end{proof}

\section{String Topology}

The following is a fundamental example: The cohomology of a \emph{compact} manifold is always a Frobenius algebra, but this fails to be the case for non-compact manifolds.

\begin{exmp}
	The Poincar\'e algebra $A:=H^*(M)$ of a non-compact manifold  $M$ admits
	a nearly Frobenius algebra structure induced from the smooth structure in $M$. Consider the diagram: \begin{equation*}
	\begin{split}
	\xymatrix{
		M\ar[r]^{\de}\ar[d]_{\de}&M\ti M\ar[d]^{1\ti\de}\\
		M\ti M\ar[r]_{\de\ti 1}&M\ti M\ti M }
	\end{split}
	\end{equation*}
	From transversality it follows that:
	$$(\Delta\ti 1)^*(1\ti \Delta)^!=\Delta^!\Delta^*,$$
	where $\de^*:\h^*(M)\ot \h^*(M)=\h^*(M\ti M)\rt \h^*(M)$ is the map
	induced by the diagonal map in cohomology, and $\de^!:\h^*(M)\rt
	\h^*(M)\ot \h^*(M)$ is the Gysin map for the diagonal map. Then:
	$$\bigl(\de^*\ot 1\bigr)\bigl(1\ot \de^!\bigr)=\de^!\de^*.$$
	Therefore  $\h^*(M)$ is an algebra with a coproduct which is a module homomorphism.  \\
	Non-compact manifolds lack a
	fundamental class in homology, and so we don't have a trace in cohomology. The coproduct $\Delta$ is a substitute for Poincar\'{e} duality in this context: it plays the role of the Poincar\'{e} dual for the cup product.
\end{exmp}
 
For an $n$-dimensional compact manifold $M$, the free loop space is the mapping space $LM:=C^0(S^1,M)$. Chas and Sullivan in \cite{ChasSullivan} used the intersection product to define an intersection product of the form:
$$\bullet:\h_l(LM)\otimes \h_m(LM) \to \h_{l+m-n}(LM).$$
This is defined as follows: given singular simplices of loops $\tilde{\sigma}_1 \in C_l(LM)$ and $\tilde{\sigma}_2 \in C_m(LM)$ we can evaluate each loop at zero, to obtain two singular simplices $\sigma_1 \in C_l(M)$ and $\sigma_2 \in C_m(M)$, then (perhaps using transversality) we can intersect both simplices to define a singular chain $\sigma_3:=\sigma_1 \cap \sigma_2 \in C_{1+m-n}(M)$. At every point $p \in \sigma_3$ we can concatenate the loop $\tilde{\sigma}_1(p) \in LM$ with the loop  $\tilde{\sigma}_2(p) \in LM$ in that order (both loops pass through $p$ at time $0$), to obtain a loop $\tilde{\sigma}_3(p)$,  thus defining a singular chain of loops $\tilde{\sigma}_3  \in C_{l+m-n}(LM)$. This definition works even if $M$ is non-compact  (see, for example, the paper by Cohen and Jones \cite{CohenJones} where they make this definition rigurous).

The point for us  is that even though $\h_*(LM)$ is almost never a Frobenius algebra, it always is a nearly Frobenius algebra as the following theorem by Cohen and Godin \cite{CohenGodin} states:

\begin{theorem}
	The Chas-Sullivan algebra $\h_*(LM)$ admits a nearly Frobenius algebra structure (induced by the smooth structure of $M$) whose coproduct extends that of $(\h_*(M), \Delta)$.
\end{theorem}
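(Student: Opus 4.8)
The plan is to follow the classical Cohen--Godin strategy for constructing the string topology coproduct and then verify that it satisfies the nearly Frobenius axioms, all of which reduce to transversality and functoriality of Gysin (umkehr) maps exactly as in the non-compact Poincar\'e-algebra example treated earlier in this section. First I would recall the two elementary geometric operations on the free loop space $LM$ that underlie the entire construction: evaluation $ev_0\colon LM\to M$, $\gamma\mapsto\gamma(0)$, and the concatenation/splitting maps on the figure-eight space $F:=\{(\gamma_1,\gamma_2)\in LM\times LM : \gamma_1(0)=\gamma_2(0)\}$, which sits inside $LM\times LM$ as the pullback of the diagonal $\Delta_M\subset M\times M$ along $ev_0\times ev_0$, and which maps by concatenation to $LM$. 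The product $\bullet$ is, up to signs and shifts, the composite $\h_*(LM\times LM)\xrightarrow{(ev_0\times ev_0)^!}\h_{*-n}(F)\to \h_{*-n}(LM)$, where the first arrow is the Gysin map of the codimension-$n$ embedding $F\hookrightarrow LM\times LM$ (well defined because $M$ is a finite-dimensional manifold even when non-compact, so the normal bundle of the diagonal makes sense).

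Next I would construct the coproduct. The relevant space here is the space $P$ of loops with a marked ``pinch point'': $P:=\{(\gamma,t)\in LM\times[0,1]: \gamma(0)=\gamma(t)\}$, or equivalently one works with a fixed-pinch-time model $\{\gamma\in LM:\gamma(0)=\gamma(1/2)\}$. There is a map $c\colon P\to LM\times LM$ splitting a pinched loop into its two lobes, and a projection $P\to LM$ remembering the loop; the pinched-loop space sits inside $LM\times[0,1]$ (or $LM$) as a codimension-$n$ subspace, again the pullback of $\Delta_M$. The coproduct $\vee\colon \h_*(LM)\to \h_{*-n}(LM\otimes LM)$ (with the appropriate degree shift) is then the composite of the Gysin map for this codimension-$n$ inclusion followed by $c_*$. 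The key point, and the reason the theorem holds for non-compact $M$, is that at no stage does one integrate over $M$ or use a fundamental class of $M$: one only uses the normal bundle of the diagonal of $M$, which always exists. So $\vee$ is defined on all of $\h_*(LM)$.

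Then I would verify the nearly Frobenius axioms. Coassociativity of $\vee$ follows from comparing the two ways of doubly pinching a loop: the space of loops with two marked pinch points carries two maps to the triply-pinched configuration, and both composites equal the Gysin map of the codimension-$2n$ inclusion, by transversality (the moving-pinch-point argument; this is the analogue of sliding a saddle past another in the Cerf-theoretic picture used above). The Frobenius/bimodule identity $a\cdot\vee(b)=\vee(a\cdot b)=\vee(a)\cdot b$ is the loop-space incarnation of the identity $(\Delta^*\otimes 1)(1\otimes\Delta^!)=\Delta^!\Delta^*$ from the Poincar\'e-algebra example: one writes both sides as Gysin maps associated to a single fibre product of copies of $M$ inside a product of copies of $LM$ (a ``double-pants'' diagram), and the equality is again a base-change/transversality statement for umkehr maps, plus the naturality of the Gysin map with respect to the concatenation maps. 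Finally, that $\vee$ extends the coproduct $\Delta$ of $(\h_*(M),\Delta)$ follows by restricting along the inclusion of constant loops $M\hookrightarrow LM$, which is compatible with $ev_0$ and intertwines pinching with the diagonal of $M$, so the Gysin map used for $\vee$ restricts to $\Delta^!$.

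The main obstacle is making the Gysin maps on these infinite-dimensional mapping spaces rigorous and transversal: $LM$ is not a finite-dimensional manifold, so one cannot naively intersect chains, and one must use either the Cohen--Jones Thom-spectrum/Pontryagin--Thom description of the umkehr maps (where $F\hookrightarrow LM\times LM$ and the pinched-loop inclusions are infinite-dimensional embeddings with finite-dimensional normal bundles, hence admit Thom collapse maps), or a Hilbert-manifold model of $LM$ with the finite-dimensional subspaces cut out transversally. Granting that machinery, everything else is bookkeeping: sign conventions for the degree shifts, and checking that the coassociativity and Frobenius diagrams commute on the nose, which --- exactly as in the compact case and in the $\h^*(M)$ example above --- is pure transversality and naturality of the collapse maps.
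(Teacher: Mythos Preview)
Your sketch is a faithful outline of the Cohen--Godin construction and is essentially correct as a strategy, but you should be aware that the paper does not actually prove this theorem: it is stated as a result of Cohen and Godin and attributed to \cite{CohenGodin}, with no argument given beyond the citation. So there is no ``paper's own proof'' to compare against; the paper treats this as a black box imported from the literature.

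That said, your approach is exactly the one taken in the cited reference, and it dovetails with the $\h^*(M)$ example the paper works out just before the theorem (the transversality identity $(\Delta^*\otimes 1)(1\otimes\Delta^!)=\Delta^!\Delta^*$). One small caution: in the Cohen--Godin setting the operations are organized via fat graphs/Sullivan chord diagrams and Thom collapse maps rather than the marked-pinch-time model you describe, and some care is needed because the naive coproduct with a moving pinch point can be degenerate (constant loops force boundary contributions); the published construction sidesteps this by working with surfaces having at least one output and using the Thom-spectrum umkehr maps you mention at the end. Your identification of that machinery as the main technical obstacle is on target.
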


In particular $\h_*(LM)$ is the space state of an almost TQFT (whose structure is induced by the smooth structure of $M$).

This defines a functor from the category of smooth manifolds to the category of nearly Frobenius algebras. While the Chas-Sullivan algebra happens to be homotopy invariant (see \cite{Cohen2008homotopy}), the nearly Frobenius algebra is not. Whether the coproduct depends on the diffeomorphism type or only on the homeomorphism type is an interesting question.

\section{The moduli variety $\mathcal{N}(A)$ of nearly Frobenius structures on an algebra $A$}

Let $A$ be an algebra, the Frobenius identities make the set of all possible nearly Frobenious coproducts $\mathcal{N}(A)$ on $A$ into a possibly infinite dimensional algebraic variety over $\k$.
 
The following fact is somewhat surprising:

\begin{theorem}
	For a $\k$-algebra $A$, the variety $\mathcal{N}(A)$ of nearly
	Frobenius coproducts of $A$ making it into a nearly Frobenius algebra is a linear $\k$-vector space.
\end{theorem}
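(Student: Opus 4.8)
The plan is to show that $\mathcal{N}(A)$ is closed under addition and scalar multiplication, i.e.\ that it is a linear subspace of $\mathrm{Hom}_k(A, A\ot A)$, rather than merely a variety. The defining equations for a point $\Delta \in \mathcal{N}(A)$ are the Frobenius identities $a\Delta(b) = \Delta(ab) = \Delta(a)b$ together with coassociativity $(\Delta\ot 1)\Delta = (1\ot\Delta)\Delta$. The first observation is that the Frobenius conditions are \emph{linear} in $\Delta$: if $\Delta$ and $\Delta'$ both satisfy $a\Delta(b)=\Delta(ab)=\Delta(a)b$ and the same for $\Delta'$, then for $\lambda,\mu \in k$ the combination $\lambda\Delta+\mu\Delta'$ evidently satisfies them too, since each side of each identity is $k$-linear in the coproduct. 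So the only thing that could fail is coassociativity, which is manifestly \emph{quadratic} in $\Delta$.

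The key step, then, is to prove that coassociativity is automatic for any bimodule-map coproduct, i.e.\ that the Frobenius conditions alone force coassociativity, so that $\mathcal{N}(A)$ is cut out entirely by the linear equations. Here is how I would argue it. Let $\Delta$ be a bimodule map and write $\Delta(1) = \sum e' \ot e''$. Then the Frobenius identities give, for any $a$, $\Delta(a) = (a\ot 1)\Delta(1) = \Delta(1)(1\ot a)$, so $\Delta$ is completely determined by the single element $w:=\Delta(1)\in A\ot A$, and moreover $w$ is ``central'' in the sense that $(a\ot 1)w = w(1\ot a)$ for all $a$. Now compute both sides of coassociativity applied to $1$:
\begin{align*}
(\Delta\ot 1)\Delta(1) &= (\Delta\ot 1)\Bigl(\sum e'\ot e''\Bigr) = \sum \Delta(e')\ot e'' = \sum (e'\ot 1)\Delta(1)\ot e'',\\
(1\ot\Delta)\Delta(1) &= \sum e'\ot\Delta(e'') = \sum e'\ot \Delta(1)(1\ot e'').
\end{align*}
Writing $\Delta(1)=\sum f'\ot f''$ for a second copy, the left side is $\sum e'f'\ot f''\ot e''$ and the right side is $\sum e'\ot f'\ot f''e''$. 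Using the centrality relation $(a\ot1)w = w(1\ot a)$ repeatedly — first to move the $e'$ factor, then to reassemble — one checks these two expressions agree; this is exactly the ``sliding one saddle past another'' move of Lemma~\ref{AlgLemma} and Figures~\ref{FrobIden1}, \ref{FrobIden2}, now carried out without ever invoking a counit. Finally, coassociativity on a general element $a$ reduces to the case of $1$: since $\Delta(a) = (a\ot1)\Delta(1)$ and the bimodule structure is compatible with $\Delta$ on all three tensor factors, $(\Delta\ot1)\Delta(a) = (a\ot1\ot1)(\Delta\ot1)\Delta(1)$ and similarly for the other side, so equality at $1$ propagates everywhere.

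Granting this, the proof concludes quickly: $\mathcal{N}(A)$ is the set of $\Delta\in\mathrm{Hom}_k(A,A\ot A)$ satisfying the Frobenius identities (coassociativity being a consequence), these identities are homogeneous linear in $\Delta$, hence $\mathcal{N}(A)$ is a linear subspace, in particular a $k$-vector space. I expect the main obstacle to be the bookkeeping in the centrality computation above — making sure the two triple-tensor expressions really do coincide using only $(a\ot1)w=w(1\ot a)$ and associativity of $\mu$, without accidentally smuggling in a counit. One clean way to organize it is to note that the map $a\mapsto (a\ot1)\Delta(1)$ being simultaneously a left- and right-module map means $\Delta$ factors through a bimodule map $A\to A\ot_? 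A$, and coassociativity is then the statement that two evident bimodule maps $A\to A^{\ot 3}$ built from $w$ agree, which can be verified on the generator $1$. Once that is pinned down, nothing else in the statement requires more than the linearity remark.
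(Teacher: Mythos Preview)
Your proposal is correct and follows essentially the same route as the paper: both establish that a nearly Frobenius coproduct $\Delta$ is determined by $\Delta(1)$, which must lie in the invariant subspace $I(A\otimes A)=\{w:(a\otimes 1)w=w(1\otimes a)\ \forall a\}$, and then verify that coassociativity is automatic from this centrality, so that $\mathcal{N}(A)\cong I(A\otimes A)$ is a linear space. For the bookkeeping you flag, the paper's computation is the clean one: expand $\Delta(e')=\Delta(1)(1\otimes e')$ on the left and $\Delta(e'')=(e''\otimes 1)\Delta(1)$ on the right, and both sides of coassociativity at $1$ become $\sum_{i,j} a_j\otimes b_ja_i\otimes b_i$ with no further manipulation.
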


	 First, the category of $A$ bimodules will be written as ${}_A\mathcal{M}_A$. For an object $M\in {}_A\mathcal{M}_A$ we write
$$I(M) =\bigl\{m\in M: a\cdot m=m\cdot a\;\forall a\in A\bigr\}$$
to denote the sub–bimodule of invariants.

The proof consists of two lemmas.

\begin{lemma}\label{lema:ea} For an arbitrary $\Bbbk$--algebra $A$, the map ${\tt e}: \mathcal E_A \to I(A \otimes A)$ defined as ${\tt e}(\Delta)=\Delta(1)$, is a bijection.                                                            \end{lemma}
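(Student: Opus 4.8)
The plan is to exhibit an explicit inverse to ${\tt e}$. The whole argument rests on one principle: a Frobenius (bimodule) map $\Delta\colon A\to A\otimes A$ (i.e. an element of $\mathcal E_A$) is entirely recovered from the single element $\Delta(1)$, and the elements of $A\otimes A$ that so arise are exactly the invariants.

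First I would check that ${\tt e}$ is well-defined, i.e. that $\Delta(1)\in I(A\otimes A)$. Specializing the Frobenius conditions $a\Delta(b)=\Delta(ab)=\Delta(a)b$ to $b=1$ and to $a=1$ gives
$$a\cdot\Delta(1)=\Delta(a)=\Delta(1)\cdot a\qquad\forall\,a\in A,$$
where $a\cdot(-)$ and $(-)\cdot a$ denote the left and right $A$-actions on $A\otimes A$. The outer equality says precisely that $\Delta(1)$ is invariant. The same display also delivers injectivity: its left-hand equality reads $\Delta(a)=a\cdot\Delta(1)$, so $\Delta$ is determined by $\Delta(1)$, whence ${\tt e}(\Delta)={\tt e}(\Delta')$ forces $\Delta=\Delta'$.

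For surjectivity I would, given $m\in I(A\otimes A)$, define the linear map $\Delta_m(a):=a\cdot m$ and verify that $\Delta_m\in\mathcal E_A$ and ${\tt e}(\Delta_m)=m$. The value at the unit is immediate, $\Delta_m(1)=1\cdot m=m$, so the real content is the pair of Frobenius identities. The first, $\Delta_m(ab)=a\,\Delta_m(b)$, is nothing but the left-module axiom $(ab)\cdot m=a\cdot(b\cdot m)$. The second, $\Delta_m(ab)=\Delta_m(a)\,b$, is the only place where invariance enters: I would rewrite $(ab)\cdot m=m\cdot(ab)=(m\cdot a)\cdot b$ using $x\cdot m=m\cdot x$, and then convert $m\cdot a=a\cdot m=\Delta_m(a)$ by invariance once more, obtaining $\Delta_m(ab)=\Delta_m(a)\cdot b$.

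I expect this second identity to be the only delicate step, since it must trade a left action for a right action and can do so solely through the invariance hypothesis; everything else is formal bookkeeping. Once both identities are in hand, $m\mapsto\Delta_m$ is a well-defined map $I(A\otimes A)\to\mathcal E_A$ two-sided inverse to ${\tt e}$ (one composite yields $\Delta_m(1)=m$, the other yields $\Delta_{\Delta(1)}(a)=a\cdot\Delta(1)=\Delta(a)$), so ${\tt e}$ is a bijection.
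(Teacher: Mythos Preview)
Your argument is essentially the paper's: establish well-definedness and injectivity by specializing the Frobenius identities at $1$, then build the inverse $m\mapsto\Delta_m$ and verify the two Frobenius identities using the invariance of $m$. The paper proceeds identically.

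There is one omission. Elements of $\mathcal E_A$ are \emph{nearly Frobenius} structures, and by the paper's definition these are required to be coassociative; the paper's own proof of this lemma checks coassociativity explicitly. You must therefore verify that $\Delta_m$ is coassociative for any $m\in I(A\otimes A)$. Writing $m=\sum a_i\otimes b_i$, invariance lets you compute $\Delta_m(a_i)=m\cdot a_i=\sum_j a_j\otimes b_ja_i$, so
\[
(\Delta_m\otimes 1)\Delta_m(1)=\sum_i\Delta_m(a_i)\otimes b_i=\sum_{i,j}a_j\otimes b_ja_i\otimes b_i=\sum_j a_j\otimes\Delta_m(b_j)=(1\otimes\Delta_m)\Delta_m(1),
\]
and the general case follows by left-multiplying by $a$. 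This is short but not automatic; without it you have only shown that ${\tt e}$ is a bijection onto $I(A\otimes A)$ from the larger set of $A$-bimodule maps $A\to A\otimes A$, not from the set of coassociative ones.
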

\begin{proof} First observe that the map ${\tt e}$ makes sense. We have that $\Delta(x)=\Delta (x1)=\Delta(1x)=(x \otimes 1)\Delta(1)=\Delta(1)(1 \otimes x)$ for all $x \in A$. This shows that the map ${\tt e}$ is injective and also its codomain is $I(A \otimes A)$.
	Given an element $\xi=\sum a_i \otimes b_i \in I(A \otimes A)$, if
	we define $\Delta_\xi(x)=x \cdot \xi = \xi \cdot x$ it is clear that
	$\Delta_\xi$ is a nearly Frobenius structure in $A$ and that ${\tt
		e}(\Delta_\xi)=\xi$.  We check that it is nearly Frobenius, for
	example: $\Delta_\xi(xy)=(xy)\cdot \xi=x \cdot \Delta_\xi(y)$ and
	similarly for the action on the left.  As to the coassociativity of
	$\Delta$ first we observe that if the element $\Delta(1)=\sum a_i
	\otimes b_i$ it is clear that: $(\Delta \otimes \id)\Delta(1)=\sum
	\Delta(a_i)\otimes b_i=\sum a_j \otimes b_ja_i \otimes b_i=\sum a_j
	\otimes \Delta(b_j)= (\id \otimes \Delta)(\Delta(1))$. The
	coassociativity for a general $x$ follows from the basic property
	$\Delta(x)=x\cdot \Delta(1)=\Delta (1) \cdot x$.
\end{proof}

\begin{lemma} The set $\mathcal N_A$ of nearly Frobenius structures in $A$ is a vector space. Moreover, $\dim \mathcal N_A \leq (\dim A)^2$. 
\end{lemma}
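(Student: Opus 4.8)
The plan is to use Lemma~\ref{lema:ea} to transport the problem from the set $\mathcal N_A$ of nearly Frobenius coproducts to the set $I(A\otimes A)$ of bimodule invariants, where linearity is transparent. First I would observe that $I(A\otimes A)$ is manifestly a $\Bbbk$-linear subspace of $A\otimes A$: if $\xi,\eta\in I(A\otimes A)$ satisfy $a\xi=\xi a$ and $a\eta=\eta a$ for all $a\in A$, then for any scalars $s,t\in\Bbbk$ the element $s\xi+t\eta$ satisfies $a(s\xi+t\eta)=s(a\xi)+t(a\eta)=s(\xi a)+t(\eta a)=(s\xi+t\eta)a$, using only bilinearity of the bimodule action. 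Hence $I(A\otimes A)$ is a subspace, and it sits inside $A\otimes A$, which has dimension $(\dim A)^2$ when $A$ is finite dimensional, so $\dim I(A\otimes A)\le(\dim A)^2$.

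Next I would check that the bijection ${\tt e}\colon\mathcal N_A\to I(A\otimes A)$, $\Delta\mapsto\Delta(1)$, is $\Bbbk$-linear, so that it is in fact an isomorphism of vector spaces once we know $\mathcal N_A$ carries a vector space structure — or, more cleanly, use ${\tt e}$ and its inverse to \emph{transport} the vector space structure from $I(A\otimes A)$ onto $\mathcal N_A$ and then verify that this transported structure is the obvious pointwise one. Concretely, given $\Delta,\Delta'\in\mathcal N_A$ and $s,t\in\Bbbk$, the map $x\mapsto s\Delta(x)+t\Delta'(x)$ is again linear $A\to A\otimes A$; one must check it is still coassociative and still a bimodule map. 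Both follow from the identity $\Delta(x)=x\cdot\Delta(1)=\Delta(1)\cdot x$ established in the proof of Lemma~\ref{lema:ea}: the candidate coproduct $s\Delta+t\Delta'$ corresponds under ${\tt e}$ to $s\Delta(1)+t\Delta'(1)\in I(A\otimes A)$, and by Lemma~\ref{lema:ea} every element of $I(A\otimes A)$ gives rise to a bona fide nearly Frobenius structure via $\Delta_\xi(x)=x\cdot\xi$. Thus $s\Delta+t\Delta'=\Delta_{s\Delta(1)+t\Delta'(1)}$ lies in $\mathcal N_A$, which is exactly the statement that $\mathcal N_A$ is closed under linear combinations; the zero coproduct is the additive identity, and the remaining vector space axioms are inherited from $A\otimes A$.

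Finally, for the dimension bound, I would simply note that ${\tt e}$ is a linear isomorphism $\mathcal N_A\xrightarrow{\sim}I(A\otimes A)\subseteq A\otimes A$, whence $\dim\mathcal N_A=\dim I(A\otimes A)\le\dim(A\otimes A)=(\dim A)^2$ when $A$ is finite dimensional (and in the infinite dimensional case $\mathcal N_A$ is still a vector space, merely possibly infinite dimensional, consistent with the earlier examples such as formal Laurent series).

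I do not expect any serious obstacle here: the real content is already contained in Lemma~\ref{lema:ea}, and what remains is the bookkeeping of checking that linear combinations of nearly Frobenius coproducts are nearly Frobenius. The only point requiring a moment's care is making sure the vector space operations on $\mathcal N_A$ (pointwise addition and scalar multiplication of the coproduct maps) match up with the linear structure on $I(A\otimes A)$ under ${\tt e}$ — but since ${\tt e}(\Delta)=\Delta(1)$ and $\Delta$ is determined by $\Delta(1)$ via $\Delta(x)=x\cdot\Delta(1)$, the map ${\tt e}$ intertwines the obvious operations on both sides, so this is immediate.
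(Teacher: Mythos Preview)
Your proposal is correct and follows exactly the same approach as the paper: transport the linear structure from $I(A\otimes A)$ to $\mathcal N_A$ via the bijection ${\tt e}$ of Lemma~\ref{lema:ea}, observe that the induced operations coincide with pointwise addition and scalar multiplication of coproduct maps, and read off the dimension bound from $I(A\otimes A)\subseteq A\otimes A$. The paper's proof is simply a terser version of what you wrote.
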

\begin{proof} Clearly, once the above bijection is established, we can induce from the linear structure of $I(A \otimes A)$ a linear structure in $\mathcal N_A$: the structure that is induced is the sum in the space of linear maps from $A$ into $A \otimes A$.  The bound of the dimensions is innediate.
\end{proof}

One final remark: for most simple examples we have:
$$\Frobdim(A):= \dim \mathcal{N}(A) = \dim_\k (A),$$
but, as the examples of quiver algebras show, this is not always tha case. In any case the previous lemma provides a bound. This suggests that an investigation of the meaning of the invariant $\Frobdim(A)$ is a worthwhile question.

\section{Acknowledgments}

The second author would like to  thank the Moshinsky Foundation, Conacyt, FORDECYT-265667, the Samuel Gitler International Collaboration Center, the Laboratory of Mirror Symmetry NRU HSE, RF Government grant, ag. No. 14.641.31.0001 and the kind hospitality of the Universities of Geneva and of Miami.

We would like to thank the referee for very careful and useful remarks that improved this paper.

%\bibliographystyle{plain}

%\bibliography{biblio}

\end{document}